\newcommand{\barr}{\overline}
\newcommand{\adj}{\rightleftarrows}
\newcommand{\CC}{\mathbb{C}}
\newcommand{\SSS}{\mathbb{S}}
\newcommand{\RR}{\mathbb{R}}
\newcommand{\NN}{\mathbb{N}}
\newcommand{\tC}{\mathrm{C}}
\newcommand{\cA}{\mathcal{A}}
\newcommand{\cB}{\mathcal{B}}
\newcommand{\cC}{\mathcal{C}}
\newcommand{\cD}{\mathcal{D}}
\newcommand{\cF}{\mathcal{F}}
\newcommand{\cL}{\mathcal{L}}
\newcommand{\cM}{\mathcal{M}}
\newcommand{\cN}{\mathcal{N}}
\newcommand{\cO}{\mathcal{O}}
\newcommand{\cP}{\mathcal{P}}
\newcommand{\cR}{\mathcal{R}}
\newcommand{\cS}{\mathcal{S}}
\newcommand{\cW}{\mathcal{W}}
\def\Csep{\mathtt{SC^*}}
\def\Csepi{\mathtt{SC_\infty^*}}
\def\CCsep{\mathtt{CSC^*}}
\def\FCW{\mathtt{CW}^f_*}
\def\NC{\mathtt{NC}}
\def\NCW{\mathtt{NCW}}
\def\Sp{\mathtt{Sp}}
\def\SpM{\mathtt{Sp^M}}
\def\Cat{\mathtt{Cat}}
\def\CSS{\mathtt{CSS}}
\def\sS{\mathcal{S}_{\mathtt{M}}}
\def\LMod{\mathtt{LMod}}
\def\RMod{\mathtt{RMod}}
\def\Ass{\mathtt{Ass}}
\def\Com{\mathtt{Com}}
\def\LM{\mathtt{LM}}
\def\Catc{\mathtt{Cat^L}}
\def\MC{\mathtt{ModCat}}
\def\SW{\mathtt{SW}}
\def\N{\mathtt{N}}
\def\M{\mathtt{M}}
\def\P{\mathtt{P}}
\def\Q{\mathtt{Q}}
\def\NSp{\mathtt{NSp}}
\def\FNCW{\mathtt{NCW}^f}
\def\CMp{\mathtt{CM_*}}
\newcommand{\Fin}{\mathtt{Fin}_*}
\newcommand{\NFin}{\mathtt{NFin_*}}
\def\Fins1{\mathtt{Fin^{\leq 1}_*}}
\def\Sphere{{\mathbb S}}
\def\Funfinc{\mathtt{Fun_*^{fc}}}
\def\Funl{\mathtt{Fun^L}}
\def\Lin{\cL}
\newtheorem{thm}{Theorem}[section]
\newtheorem{cor}[thm]{Corollary}
\newtheorem{lem}[thm]{Lemma}
\newtheorem{prop}[thm]{Proposition}
\theoremstyle{definition}
\newtheorem{define}[thm]{Definition}
\theoremstyle{remark}
\newtheorem{rem}[thm]{Remark}
\DeclareMathOperator{\C}{C}
\DeclareMathOperator{\id}{Id}
\DeclareMathOperator{\Fun}{Fun}
\DeclareMathOperator{\op}{op}
\DeclareMathOperator{\Set}{Set}
\DeclareMathOperator{\Ind}{Ind}
\DeclareMathOperator{\Map}{Map}
\DeclareMathOperator{\Alg}{Alg}
\DeclareMathOperator{\LKan}{LKan}
\DeclareMathOperator{\Quiv}{Quiv}
\DeclareMathOperator{\Hom}{Hom}
\DeclareMathOperator{\prehocolim}{hocolim}
\DeclareMathOperator{\precolim}{colim}
\DeclareMathOperator{\Top}{Top}
\def\colim{\mathop{\precolim}}
\def\hocolim{\mathop{\prehocolim}}
\def\lrar{\longrightarrow}
\DeclareTextFontCommand{\textcyr}{\fontencoding{OT2}\fontfamily{wncyr}\fontseries{m}\fontshape{n}\selectfont}
\newcommand\noloc{%
  \nobreak
  \mspace{6mu plus 1mu}
  {:}
  \nonscript\mkern-\thinmuskip
  \mathpunct{}
  \mspace{2mu}
}
\begin{document}
\title{Noncommutative CW-spectra as enriched presheaves on matrix algebras}

\author{Gregory Arone \thanks{Supported in part by the Swedish Research Council, grant number 2016-05440} \\ Stockholm University \\ gregory.arone@math.su.se \and Ilan Barnea \thanks{Supported by ISF 786/19} \\ Haifa University \\ ilanbarnea770@gmail.com \and Tomer M. Schlank \thanks{Supported by ISF 1588/18 and BSF 2018389} \\ Hebrew University \\tomer.schlank@gmail.com}

\maketitle

\begin{abstract}

Motivated by the philosophy that $C^*$-algebras reflect noncommutative topology, we investigate the stable homotopy theory of the (opposite) category of $C^*$-algebras. We focus on $\C^*$-algebras which are non-commutative CW-complexes in the sense of \cite{ELP}. We construct the stable $\infty$-category of noncommutative CW-spectra, which we denote by $\NSp$. Let $\cM$ be the full spectral subcategory of $\NSp$ spanned by ``noncommutative suspension spectra'' of matrix algebras. Our main result is that $\NSp$ is equivalent to the $\infty$-category of spectral presheaves on $\cM$.

To prove this we first prove a general result which states that any compactly generated stable $\infty$-category is naturally equivalent to the $\infty$-category of spectral presheaves on a full spectral subcategory spanned by a set of compact generators. This is an $\infty$-categorical version of a result by Schwede and Shipley \cite{ScSh1}. In proving this we use the language of enriched $\infty$-categories as developed by Hinich~\cite{Hin2,Hin3}.

We end by presenting a ``strict'' model for $\cM$. That is, we define a category $\cM_s$ strictly enriched in a certain monoidal model category of spectra $\SpM$. We give a direct proof that the category of $\SpM$-enriched presheaves $\cM_s^{\op}\to\SpM$ with the projective model structure models $\NSp$ and conclude that $\cM_s$ is a strict model for $\cM$.
\end{abstract}

\tableofcontents

\section{Introduction}
The celebrated   Gelfand  theorem gives a contravariant equivalence between the categories of locally compact Hausdorff spaces and commutative $C^*$-algebras. This correspondence led to the point of view that $C^*$-algebras are noncommutative generalizations of topological spaces. The study of $C^*$-algebras from this perspective is the subject of noncommutative geometry and topology. In this paper we study noncommutative stable homotopy theory, i.e., the stable homotopy category of the opposite of the category of $C^*$-algebras. In doing this we are continuing the investigations of \O stv\ae r~\cite{Ost} and Mahanta~\cite{Mah1}, among others.

To be a little more specific, in this paper we construct the $\infty$-category of noncommutative CW-spectra, which we denote $\NSp$, and show that $\NSp$ is equivalent to the category of spectral presheaves over a spectrally enriched category $\cM$. The objects of $\cM$ are noncommutative suspension spectra of matrix algebras, and its morphisms are mapping spectra between matrix algebras. In a companion paper~\cite{ABS2} we analyze the category $\cM$ in considerable detail. In that paper we introduce a rank filtration of $\cM$, describe the subquotients of the rank filtration, and use it, in particular, to give an explicit model for the rationalization of $\NSp$.

We construct the $\infty$-category of noncommutative CW-spectra, as the stabilization of the $\infty$-category of noncommutative CW-complexes. Our construction of the $\infty$-category of noncommutative CW-complexes mimics Lurie's construction of the $\infty$-category of ``ordinary'' CW-complexes as the ind-completion of the $\infty$-category of finite CW-complexes~\cite{Lur1}. The latter is considered an $\infty$-category by first viewing it as a topological category in the obvious way and then taking the topological nerve \cite[Section 1.1.5]{Lur1}. (By a topological category in this paper we mean a category enriched in the category of compactly generated weak Hausdorff spaces.)

In more detail, consider the class of $C^*$-algebras called ``noncommutative
CW-complexes'' in \cite[Section 2.4]{ELP}. These are algebras generated from the finite dimensional matrix algebras by a finite inductive procedure, generalizing the construction of \textbf{finite} CW-complexes from $S^0$ in the commutative case.
We will therefore call them \emph{finite noncommutative CW-complexes} in this paper.

Finite noncommutative CW-complexes have been studied in several places (for instance \cite{Ped} and \cite{Die}). In Section~\ref{ss:complexes} we define the \emph{topological category of finite noncommutative CW-complexes} to be the \textbf{opposite} of the topological category whose objects are the $C^*$-algebras which are noncommutative CW-complexes and whose hom-spaces are given by taking the topology of pointwise norm convergence on the sets of $*$-homomorphisms. We define the \emph{$\infty$-category of finite noncommutative CW-complexes} by taking the topological nerve of this topological category. We denote both versions of this category by $\FNCW$. We now define the \emph{$\infty$-category of noncommutative CW-complexes} to be the ind-completion of $\FNCW$. This will be our generalization of the $\infty$-category of spaces and we denote it by $\NCW$. It can be shown that the $\infty$-category $\FNCW$ is pointed, essentially small and admits finite colimits, so $\NCW$ is a pointed compactly generated $\infty$-category.

Let $\NSp:=\Sp(\NCW)$ be the \emph{$\infty$-category of noncommutative CW-spectra}, i.e., the stabilization of $\NCW$. By construction $\NSp$ is a stable $\infty$-category. In particular, it is enriched and tensored over the $\infty$-category of ``ordinary'' spectra $\Sp$. There is a suspension-spectrum functor from noncommutative spaces to noncommutative spectra, which we denote $\Sigma^\infty_{\NC}\colon \NCW\to \NSp$.
It can be shown (see Section \ref{ss:complexes}) that the (maximal) tensor product of $C^*$-algebras induces a closed symmetric monoidal structure on both $\NCW$ and $\NSp$, such that $\Sigma_{\NC}^\infty:\NCW\lrar\NSp$ is symmetric monoidal. Our main result is a presentation of $\NSp$ as a category of spectral presheaves over a full spectral subcategory spanned by an explicit set of generators.

In order to prove this we first prove a general result about presenting a compactly generated stable $\infty$-category as a category of spectral presheaves over a full spectral subcategory spanned by a set of compact generators. Such a result was proven by Schwede and Shipley \cite{ScSh1} using model categories (see also \cite{GM} for the same result under more general hypotheses). However, in this paper we need a more general result formed in the language of $\infty$-categories. To obtain this we use the formalism of enriched $\infty$-categories developed by Hinich \cite{Hin2,Hin3} and reviewed in Section \ref{ss:enriched}. We can formulate our result as follows:
\begin{thm}[Theorem \ref{t:modules}]\label{t:modules0}
  Let $\cD$ be a cocomplete stable $\infty$-category. Suppose that there is a small set $C$ of compact objects in $\cD$, that generates $\cD$ under colimits and desuspensions. Thinking of $\cD$ as left-tensored over the $\infty$-category of spectra $\Sp$, we let $\cC$ be the full $\Sp$-enriched subcategory of $\cD$ spanned by $C$. Then $\cD$ is naturally equivalent to the $\infty$-category of spectral presheaves on $\cC$, denoted $P_{\Sp}(\cC)$.
\end{thm}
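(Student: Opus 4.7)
The plan is to construct the enriched Yoneda-type functor $F\colon \cD \to P_{\Sp}(\cC)$ sending $d$ to $\Map_{\cD}(-,d)$, and to show it is an equivalence by producing a left adjoint $L$ and checking that $L$ is fully faithful and essentially surjective. The key input is Hinich's framework \cite{Hin2,Hin3}, in which $P_{\Sp}(\cC)$ is characterized as the free $\Sp$-linear cocompletion of the small $\Sp$-enriched $\infty$-category $\cC$. Since $\cD$ is cocomplete and stable, it is canonically $\Sp$-linear and cocomplete in the enriched sense, so the $\Sp$-enriched inclusion $\iota\colon \cC \hookrightarrow \cD$ extends essentially uniquely to a $\Sp$-linear colimit-preserving functor $L\colon P_{\Sp}(\cC)\to\cD$ with $L\circ y \simeq \iota$. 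Applying the presentable adjoint functor theorem, $L$ has a right adjoint; by the enriched Yoneda lemma this right adjoint is precisely $F$.

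I would then show $L$ is an equivalence in two steps. \emph{Essential surjectivity:} the essential image of $L$ is a full subcategory of $\cD$ closed under colimits (since $L$ preserves them) and desuspensions (since $L$ is exact), and it contains $C$ because $L(y(c))\simeq\iota(c)=c$; by the generation hypothesis on $C$ it therefore equals $\cD$. \emph{Full faithfulness:} equivalently, the unit $\eta\colon\id\to FL$ is an equivalence. On a representable $y(c')$ one computes $(FLy(c'))(c) = \Map_{\cD}(c, Ly(c')) \simeq \Map_{\cD}(c,c') \simeq \Map_{\cC}(c,c') = y(c')(c)$, so $\eta$ is an equivalence on representables. Since every object of $C$ is compact in $\cD$, the functor $F$ preserves colimits; combined with colimit-preservation of $L$ and the fact that the representables $y(c)$ generate $P_{\Sp}(\cC)$ under colimits and desuspensions, this propagates the equivalence $\eta$ from representables to all of $P_{\Sp}(\cC)$.

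I expect the main obstacle to be not the abstract two-step argument above, but the underlying foundational work: functorially promoting the stable presentable $\infty$-category $\cD$ to a $\Sp$-enriched $\infty$-category, identifying $\cC$ with the full $\Sp$-enriched subcategory spanned by $C$, and invoking Hinich's universal property of $P_{\Sp}(\cC)$ in exactly the form needed to extend $\iota$ to $L$ and to guarantee that $\Sp$-linear colimit-preserving equivalences are the correct notion to detect. Once this bureaucracy is in place, the compact-generators argument runs in parallel with the classical model-categorical result of Schwede--Shipley \cite{ScSh1}.
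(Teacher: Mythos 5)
Your proposal is correct and follows essentially the same route as the paper: extend the inclusion $\cC\hookrightarrow\cD$ to a colimit-preserving $\Sp$-linear functor via Hinich's universal property of $P_{\Sp}(\cC)$, obtain the right adjoint from presentability, verify the unit on representables, and propagate using the generation hypotheses plus the fact that the right adjoint preserves colimits. The only compression worth flagging is your assertion that compactness of the objects of $C$ makes $F$ colimit-preserving: compactness only gives preservation of filtered colimits, and you additionally need stability of both categories (so that the right adjoint, being exact, preserves pushouts) to conclude it preserves all small colimits --- exactly as the paper spells out in its final paragraph.
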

There is also a monoidal version of the Theorem \ref{t:modules0}, given in Theorem \ref{t:modules monoidal}.

The classical stable infinity category of spectra $\Sp$ is generated by a single object, the sphere spectrum $\Sphere$, and this is closely related to the fact that $\Sp$ can be identified with the category of $\Sphere$-modules. By contrast $\NSp$ requires infinitely many generators. Let $M_n$ be the algebra of $n\times n$ matrices over $\mathbb{C}$. The algebras $\{M_n\mid n=1, 2, \ldots\}$ are the finite-dimensional simple $C^*$-algebras. Collectively, they play the same role in $\NCW$ as $S^0$ in the usual category of CW-complexes. The suspension spectra $\{\Sigma^\infty_{\NC} M_n\mid n=1, 2, \ldots \}$ are compact objects of $\NSp$, and they generate $\NSp$ under $\infty$-colimits and desuspensions. Let $\cM$ be the full $\Sp$-enriched subcategory of $\NSp$ spanned by $\{\Sigma^\infty_{\NC} M_n\mid n=1, 2, \ldots \}$.

For every $n,m\geq 0$ we have $M_n\otimes M_m\simeq M_{n\times m}$, so the set $\{M_n\mid n=1, 2, \ldots\}$ is closed under the tensor product. Since $\Sigma^\infty_{\NC}$ is monoidal, we see that $\{\Sigma^\infty_{\NC} M_n\mid n=1, 2, \ldots \}$ is also closed under the tensor product. It follows that the $\Sp$-enriched category $\cM$ acquires a symmetric monoidal structure from $\NSp$. This monoidal structure induces a symmetric monoidal structure on the $\infty$-category of spectral presheaves $P_\Sp (\cM)$ (Day convolution). Using the monoidal version of Theorem \ref{t:modules0} we obtain
\begin{thm}[Theorem \ref{theorem: main presentation}]\label{theorem: presentation0}
  The symmetric monoidal $\infty$-category $\NSp$ is naturally equivalent to the symmetric monoidal $\infty$-category $P_{\Sp}(\cM)$ of spectral presheaves on $\cM$.
\end{thm}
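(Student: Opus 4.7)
The plan is to deduce Theorem \ref{theorem: presentation0} by applying the monoidal version of Theorem \ref{t:modules0}, namely Theorem \ref{t:modules monoidal}, to the symmetric monoidal $\infty$-category $\cD = \NSp$ with the distinguished set of compact objects $C = \{\Sigma^\infty_{\NC} M_n\}_{n \geq 1}$, whose enriched span is $\cM$. The hypotheses to verify are: (i) $\NSp$ is a cocomplete stable symmetric monoidal $\infty$-category; (ii) every $\Sigma^\infty_{\NC} M_n$ is compact in $\NSp$; (iii) the set $C$ generates $\NSp$ under colimits and desuspensions; and (iv) $C$ is closed under the symmetric monoidal product and contains the unit.

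For (i), $\FNCW$ is essentially small, pointed, and admits finite colimits, so $\NCW = \Ind(\FNCW)$ is pointed compactly generated; stabilization gives that $\NSp = \Sp(\NCW)$ is presentable and stable, and it carries the symmetric monoidal structure constructed in Section~\ref{ss:complexes} (induced from the maximal $C^*$-tensor product), so that $\Sigma^\infty_{\NC}$ is symmetric monoidal. For (ii), each $M_n$ lies in $\FNCW$, hence is a compact object of $\NCW = \Ind(\FNCW)$; since $\Sigma^\infty_{\NC}$ is a colimit-preserving left adjoint whose right adjoint $\Omega^\infty$ commutes with filtered colimits (by stability and compact generation), compactness is preserved, so $\Sigma^\infty_{\NC} M_n$ is compact in $\NSp$. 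For (iii), the cell-complex description of \cite{ELP} exhibits every object of $\FNCW$ as built from the $M_n$ by a finite sequence of noncommutative cell attachments realized as pushouts in $\FNCW$; hence $\{M_n\}$ generates $\FNCW$ under finite colimits, and therefore generates $\NCW$ under all colimits after $\Ind$-completion. Passing to stabilizations, the suspension spectra $\{\Sigma^\infty_{\NC} M_n\}$ then generate $\NSp$ under colimits and desuspensions by the standard description of generators of the stabilization of a compactly generated pointed $\infty$-category. For (iv), $M_n \otimes M_m \cong M_{nm}$ as $C^*$-algebras, and because $\Sigma^\infty_{\NC}$ is symmetric monoidal this yields $\Sigma^\infty_{\NC} M_n \otimes \Sigma^\infty_{\NC} M_m \simeq \Sigma^\infty_{\NC} M_{nm}$, with unit $\Sigma^\infty_{\NC} \CC = \Sigma^\infty_{\NC} M_1 \in C$, so $C$ is closed under $\otimes$ and contains the unit.

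With (i)--(iv) established, Theorem \ref{t:modules monoidal} directly furnishes a symmetric monoidal equivalence $\NSp \simeq P_\Sp(\cM)$, with the right-hand side equipped with the Day convolution induced by the monoidal structure on $\cM$ inherited from $\NSp$. The main obstacle I expect is step (iii): one must carefully transfer the strict $C^*$-algebraic cell-attachment pushouts of \cite{ELP} into the $\infty$-categorical setting of $\FNCW$ (checking that the relevant $C^*$-algebra pushouts are in fact $\infty$-categorical pushouts after taking topological nerves) and then track the resulting generating set through both $\Ind$-completion and stabilization. Once this generation statement is secured, the passage to the presentation as $P_\Sp(\cM)$ is a formal application of the abstract presentation theorem.
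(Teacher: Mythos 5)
Your proposal is correct and follows essentially the same route as the paper: one verifies the hypotheses of Theorem \ref{t:modules monoidal} for $\cD=\NSp$ and $C=\{\Sigma^\infty_{\NC}M_n\}$ and applies it, with generation coming from Corollary \ref{cor: generates} and closure under $\otimes$ from $M_n\otimes M_m\cong M_{nm}$. The obstacle you flag in step (iii) is dissolved by Definition \ref{definition: FNCW}, which defines $\FNCW$ as the smallest subcategory containing the matrix algebras and closed under finite (homotopy) colimits, so generation of $\FNCW$ under finite colimits holds by construction rather than by an appeal to the cell structure of \cite{ELP}.
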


Thus, understanding the spectral $\infty$-category $\cM$ should help us understand the $\infty$-category $\NSp$. The objects of $\cM$ are in one-one correspondence with natural numbers and the monoidal product acts as multiplication. Given natural numbers $k, l$, we denote the corresponding mapping spectrum by $\SSS^{k,l}$
\[\SSS^{k,l}:=\Hom_{\NSp}(\Sigma^\infty_{\NC} M_k, \Sigma^\infty_{\NC} M_l).\]
One can describe $\SSS^{k,l}$ explicitly as follows. First, let us define a functor $G_{k,l}$ from finite pointed spaces to pointed spaces by the formula
\[
G_{k,l}(X)=\Map_{\FNCW}(M_k, X\wedge M_l).
\]
Since the pointed $\infty$-category $\FNCW$ has finite colimits, it is tensored over finite spaces and enriched over spaces. The spectrum $\SSS^{k,l}$ is the {\it stabilization} of $G_{k,l}$, i.e., $\SSS^{k,l}$ is the spectrum given by the sequence $\{G_{k,l}(S^0), G_{k,l}(S^1), \ldots$\}. In the companion paper~\cite{ABS2} we undertake a detailed study of the spectra $\SSS^{k,l}$ and the structure of $\cM$.

We end the paper by constructing a ``strict" version of $\cM$.
Namely, let $\SpM$ be the category of continuous pointed functors from finite pointed CW-complexes to topological spaces, endowed with the stable model structure. This is a symmetric monoidal model category, that models the $\infty$-category of spectra \cite{Lyd,MMSS}. In Definition \ref{d:M_s}, we define a symmetric monoidal category, strictly enriched in $\SpM$, denoted $\cM_s$. We give a direct proof of the following, which can be considered a strict version of Theorem \ref{theorem: presentation0}:
\begin{thm}[Theorem \ref{theorem: main presentation strict}]\label{theorem: main presentation strict0}
The category of $\SpM$-enriched functors $\cM_s^{\op}\to\SpM$ with the projective model structure and Day convolution is a symmetric monoidal model category that models the symmetric monoidal $\infty$-category $\NSp$.
\end{thm}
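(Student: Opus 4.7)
The plan is to realize the underlying symmetric monoidal $\infty$-category of the projective model structure on $\Fun_{\SpM}(\cM_s^{\op},\SpM)$ with Day convolution as $P_\Sp(\cM)$, and then quote Theorem~\ref{theorem: presentation0} to identify it with $\NSp$ as a symmetric monoidal $\infty$-category. The three steps are: set up the model category; pass to underlying $\infty$-categories; and compare the strict enriched category $\cM_s$ with the $\Sp$-enriched $\infty$-category $\cM$.

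First I would verify that $\Fun_{\SpM}(\cM_s^{\op},\SpM)$ carries a cofibrantly generated projective model structure in which weak equivalences and fibrations are defined objectwise, and that Day convolution makes it a symmetric monoidal model category. The existence of the projective model structure is standard for $\SpM$-enriched presheaf categories once one knows that $\SpM$ is a cofibrantly generated, stable, symmetric monoidal model category, which it is by \cite{Lyd,MMSS}. The pushout-product and unit axioms for Day convolution reduce, via the argument of Isaacson/Hovey, to cofibrancy hypotheses on the hom-spectra and unit of $\cM_s$; these must be checked from the explicit construction in Definition~\ref{d:M_s}, but since $\cM_s$ is built from matrix algebras and continuous pointed functors, the relevant hom-objects land in a part of $\SpM$ where cofibrancy is either automatic or can be arranged by a mild replacement.

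Second, I would identify the symmetric monoidal $\infty$-category associated to this model category. Using Hinich's framework of enriched $\infty$-categories as reviewed in Section~\ref{ss:enriched}, and the fact that $\SpM$ presents $\Sp$ symmetric monoidally, the underlying symmetric monoidal $\infty$-category of $\Fun_{\SpM}(\cM_s^{\op},\SpM)^{\mathrm{proj}}$ with Day convolution is $P_\Sp(\cM_s^\infty)$, where $\cM_s^\infty$ denotes the symmetric monoidal $\Sp$-enriched $\infty$-category obtained from $\cM_s$ by applying the localization $\SpM\to\Sp$ hom-wise. This is a general statement about strict $V$-enriched categories versus enriched $\infty$-categories, applied to $V=\SpM$, and I would cite the relevant comparison theorem rather than re-prove it.

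Third — and this is the real content — I would exhibit a symmetric monoidal equivalence of $\Sp$-enriched $\infty$-categories $\cM_s^\infty\simeq\cM$. By construction the objects of $\cM_s$ correspond bijectively to natural numbers, matching the objects $\Sigma^\infty_{\NC}M_n$ of $\cM$, and the monoidal product on both sides sends $(k,l)$ to $kl$ via $M_k\otimes M_l\cong M_{kl}$. The hom-spectrum $\cM_s(k,l)\in\SpM$ is designed so that its value on $S^n$ is a model for $\Map_{\FNCW}(M_k, S^n\wedge M_l)=G_{k,l}(S^n)$; hence under the localization $\SpM\to\Sp$ it becomes $\SSS^{k,l}$. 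I would package this into a symmetric monoidal $\SpM$-enriched functor from $\cM_s$ into a strictification of the hom-spectra in $\NSp$, verify it is essentially surjective and fully faithful on derived mapping spectra, and invoke the standard fact that such a functor of symmetric monoidal $\Sp$-enriched $\infty$-categories is an equivalence.

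The hard part will be the third step, specifically the symmetric monoidal coherence of the comparison $\cM_s^\infty\simeq\cM$. Matching individual mapping spectra $\cM_s(k,l)\simeq \SSS^{k,l}$ is straightforward from the definition of $G_{k,l}$, but promoting the collection of these equivalences to a symmetric monoidal functor requires producing coherent homotopies between the strict multiplication in $\cM_s$ (coming from juxtaposition of matrix algebra morphisms) and the derived multiplication in $\cM$ (coming from the symmetric monoidal structure on $\NSp$). I would handle this by choosing a symmetric monoidal $\SpM$-enrichment from the start — built from the maximal $C^*$-tensor product on matrix algebras — so that the comparison functor is strictly symmetric monoidal at the pointset level, and then only an objectwise homotopy check is required. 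Once the symmetric monoidal equivalence $\cM_s^\infty\simeq\cM$ is established, combining it with the identification from the second step and Theorem~\ref{theorem: presentation0} yields the theorem.
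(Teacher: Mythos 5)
Your overall strategy inverts the paper's logical order, and in doing so it leaves the two hardest points unproved. The paper gives a \emph{direct} proof of Theorem \ref{theorem: main presentation strict}, independent of Theorem \ref{theorem: main presentation}: it constructs the restricted Yoneda functor $RY\colon\FNCW\to P_{\SpM}(\cM_s)$ at the strict level, checks that it preserves weak equivalences and (after $\infty$-localization) finite colimits, extends it to a colimit-preserving functor $\NSp=\Ind(\SW(\FNCW))\to P_{\SpM}(\cM_s)_\infty$, and then verifies fully faithfulness first on suspension spectra of matrix algebras (via Lemma \ref{lemma: faithful}, the strict enriched Yoneda lemma, and the identification of fibrant replacement in $\SpM$ with linearization) and afterwards on all of $\NSp$ by a colimit bootstrap; essential surjectivity follows because the image contains the representables and is closed under colimits. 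The identification $(\cM_s)_\infty\simeq\cM$ is then \emph{deduced} from this theorem, not used to prove it.

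This points to two genuine gaps in your plan. First, your second step asserts that the underlying symmetric monoidal $\infty$-category of the projective model structure on $\SpM$-enriched presheaves with Day convolution is Hinich's $P_{\Sp}((\cM_s)_\infty)$, and you propose to ``cite the relevant comparison theorem''. No such rectification theorem is quoted or proved in the paper, and its unavailability in Hinich's framework is precisely why the authors argue directly; the only comparison result established in the paper's Section \ref{ss:enriched model} is Corollary \ref{c:fully faithful} (a homotopy fully faithful strict enriched functor localizes to a fully faithful enriched $\infty$-functor), which is far weaker than what you need. Second, your third step --- the symmetric monoidal equivalence $(\cM_s)_\infty\simeq\cM$ --- requires a comparison functor from $\cM_s$ into ``a strictification of the hom-spectra in $\NSp$'', but no such strict model of $\cM$ exists a priori: $\cM$ is defined only abstractly, via Proposition \ref{proposition: enrichment}, as the full $\Sp$-enriched subcategory of $\NSp$ spanned by the $\Sigma^\infty_{\NC}M_n$. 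The paper obtains $(\cM_s)_\infty\simeq\cM$ by composing the Yoneda embedding of a fibrant replacement $\cM_s^f$ with the equivalence of Theorem \ref{theorem: main presentation strict} itself, so taking this comparison as an input to the proof would be circular unless you supply an independent construction. You correctly flag this as ``the hard part'', but the proposed remedy (arranging the comparison to be strictly monoidal at the point-set level) does not address where the target of the comparison lives, which is the actual obstruction.
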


In Definition \ref{d:enriched localization} we define the notion of enriched $\infty$-localization. This takes a category strictly enriched in a monoidal model category, and produces an $\infty$-category enriched in the $\infty$-localization of this model category (see also Remark \ref{r:coherent}). A consequence of Theorem \ref{theorem: main presentation strict0} is that the enriched $\infty$-localization of $\cM_s$ is equivalent to $\cM$.

\begin{rem}
Using the work of Blom and Moerdijk~\cite{BM}, it is possible to define a model category structure on the opposite of the pro-category of separable $C^*$-algebras, that models $\NCW$. This model structure is a right Bousfield localization of the model structure presented in \cite{BJM}. We might then be able to use known results on stable model categories to prove a similar result to Theorem \ref{theorem: main presentation strict0}. We did not pursue this approach in this paper.
\end{rem}

\subsubsection*{An alternative definition, via nonabelian derived categories}
We will now digress to describe another natural way of defining a noncommutative analogue to the $\infty$-category of pointed spaces of Lurie. It relies even more on $\infty$-categorical constructions, and we do not develop it in this paper. Namely, we can do so using the concept of a nonabelian derived category (see \cite[Section 5.5.8]{Lur1}). If $\cC$ is a small $\infty$-category with finite coproducts, Lurie defines the nonabelian derived category of $\cC$, denoted $\cP_\Sigma(\cC)$, as the $\infty$-category obtained from $\cC$ by formally adjoining colimits of sifted diagrams. Loosely speaking sifted diagrams are generated by filtered diagrams and the simplicial diagram $\Delta^{\op}$. Taking $\cC=\Fin$ to be the category of finite pointed sets, we obtain the $\infty$-category of pointed spaces, that is, we have a natural equivalence $\cP_\Sigma(\Fin)\simeq \Top$.

Under the  Gelfand  correspondence, the finite pointed sets correspond to the finite dimensional commutative $C^*$-algebras.
We thus denote by $\NFin$ the full subcategory of $\FNCW$ spanned by the finite dimensional $C^*$-algebras (which are just finite products of matrix algebras).
We can now define a noncommutative analogue to the $\infty$-category of pointed spaces to be the nonabelian derived category of $\NFin$:
$$\barr{\NCW}:=\cP_\Sigma(\NFin).$$
We have natural inclusions
$$\NFin\hookrightarrow\FNCW\hookrightarrow\Ind(\FNCW)=\NCW$$
and the $\infty$-category $\NCW$ admits sifted colimits, so by the universal property we have an induced functor
$$\barr{\NCW}=\cP_\Sigma(\NFin)\to\NCW,$$
that commutes with sifted colimits. We do not know if this functor is an equivalence. This is true iff for every $n\geq 0$ and every simplicial object $X$ in $\FNCW$ the natural map
$${\colim}_{\Delta^{\op}} \Map_{\NCW}(M_n, X)\to \Map_{\NCW}(M_n, {\colim}_{\Delta^{\op}}^{\NCW} X)$$
is an equivalence. We know how to prove this last assertion when $X$ is a simplicial object in $\NFin$ or $X$ has the form $Y\otimes M_k$ for $k\geq 1$ and $Y$ is a simplicial object in $\FNCW$ composed of commutative algebras.

What we do know is that the induced map on stabilizations:
$$\Sp(\barr{\NCW})\to\Sp(\NCW)=\NSp$$
is an equivalence. To see this note that, by a similar reasoning as in the beginning of Section \ref{s:NCWSp}, we have that  $\barr{M}:=\{\Sigma^\infty M_i\mid i\in \NN\}$ generates $\Sp(\barr{\NCW})$ under small colimits. Thus, by Theorem \ref{theorem: main presentation}, it is enough to show that
for every $k,l\geq 1$ the induced map
$$\Hom_{\Sp(\barr{\NCW})}(\Sigma^\infty M_k,\Sigma^\infty M_l)\to\Hom_{\NSp}(\Sigma^\infty M_k,\Sigma^\infty M_l)$$
is an equivalence. We can define the functor
$\barr{G}_{k,l}$ from finite pointed spaces to pointed spaces by
$$\barr{G}_{k,l}(X):=\Map_{\barr{\NCW}}(M_k,X\wedge M_l).$$
The stabilization of $\barr{G}_{k,l}$ is the mapping spectrum
$$\Map_{\Sp(\barr{\NCW})}
(\Sigma^\infty M_k, \Sigma^\infty M_l).$$
It is thus enough to show that the induced natural transformation $\barr{G}_{k,l}\to G_{k,l}$ is an equivalence.
The functor $\barr{G}_{k,l}$ clearly commutes with sifted colimits and therefore it is equivalent to the (derived) left Kan extension of $\barr{G}_{k,l}|_{\Fin}$ along the inclusion $\Fin\subseteq \cS_*$. We prove in~\cite{ABS2} that the same holds for the functor $G_{k,l}$. Therefore it is enough to show that the restriction $\barr{G}_{k,l}|_{\Fin}\to G_{k,l}|_{\Fin}$ is an equivalence. But for every $[t]\in \Fin$ we have
$$\barr{G}_{k,l}([t])=\Map_{\cP_\Sigma(\NFin)}(M_k,[t]\wedge M_l) \simeq\Map_{\NFin}(M_k,M_l^t) =$$
$$\Map_{\FNCW}(M_k,M_l^t) \simeq\Map_{\Ind(\FNCW)}(M_k,[t]\wedge M_l)  =G_{k,l}([t]),$$
so we are done. Note that since the main results in this paper (and in \cite{ABS2}) concern the stabilization of $\NCW$, they apply equally well to the stabilization of the alternative model $\barr{\NCW}$.

\subsubsection*{Comparison with previous work} We end the introduction by relating the $\infty$-categories $\FNCW$ and $\NCW$ constructed here with different $\infty$-categories constructed in \cite{Mah1}. For more detail see Section \ref{ss:complexes}. In \cite{Mah1}, Mahanta constructed the $\infty$-category $\Csepi$ as the topological nerve of the topological category of \textbf{all} separable $C^*$-algebras, with the mapping spaces given by the topology of pointwise norm convergence on the sets of $*$-homomorphisms. He called $(\Csepi)^{\op}$ the $\infty$-category of \emph{pointed
compact metrizable noncommutatives spaces}. He then defined the $\infty$-category $\N{\cS}_*$
as the ind-completion of $(\Csepi)^{\op}$, and called it the $\infty$-category of \emph{pointed noncommutative spaces}.

It can be shown that our $\infty$-category $\FNCW$ is a full subcategory of $(\Csepi)^{\op}$ and the inclusion commutes with finite colimits. It follows that our $\infty$-category $\NCW$ is a coreflective full subcategory of $\N\cS_*$, and thus the inclusion admits a right adjoint:
$$i\colon\NCW\adj\N\cS_*\noloc R.$$
We call a morphism $g\colon X\to Y$ in $\N\cS_*$ a \emph{weak homotopy equivalence} if for every $n\geq 1$ the induced map
     $$g_*\colon\Map_{\N\cS_*}(M_n,X)\to\Map_{\N\cS_*}(M_n,Y)$$
     is an equivalence of spaces.
This is analogous to weak homotopy equivalences between topological spaces.
The counit of the adjunction above $i\circ R \to \id_{\N\cS_*}$ is a levelwise weak equivalence, and thus can be thought of as a CW approximation to elements in $\N\cS_*$. If $X$ and $Y$ are noncommutative CW-complexes then $g$ is a weak equivalence iff it is an equivalence in $\N{\cS}_*$.

Informally speaking, since the equivalences in $(\Csepi)^{\op}$ are homotopy equivalences of $C^*$-algebras, the category $\N\cS_*=\Ind((\Csepi)^{\op})$ is somewhat analogous to the infinity category modeled by the Str\o m model structure on topological spaces \cite{Str}, in which the weak equivalences are the homotopy equivalences. The category $\NCW$ constructed here is analogous to the infinity category modeled by the Quillen model structure on topological spaces, in which the weak equivalences are the \textbf{weak} homotopy equivalences.

\subsubsection*{Section by section outline of the paper}
In Section~\ref{ss:complexes} we define the $\infty$-category $\NCW$: a noncommutative analogue of the $\infty$-category of pointed spaces.

In Section~\ref{ss:enriched} we review the theory of enriched $\infty$-categories, as developed by Hinich~\cite{Hin2, Hin3}. In particular we state the enriched Yoneda lemma for $\infty$-categories. We also present a way to pass from model categories to $\infty$-categories is the enriched setting.

In Section~\ref{s:stab_infinity} we review the notion of a stable $\infty$-category and show that a compactly generated stable $\infty$-category is equivalent to the category of spectral presheaves on a full subcategory spanned by a set of compact generators.

In Section~\ref{ss:stab} we review the process of stabilizing an $\infty$-category. We use the framework established by Lurie in~\cite[Section 1.4]{Lur2}. We also review how a similar procedure can be applied to an ordinary topologically enriched category, and compare the strict and the $\infty$-categorical versions of stabilization.

In Section~\ref{s:NCWSp} we define the category of non-commutative CW-spectra $\NSp$ as the stabilization of the category $\NCW$. We identify the  suspension spectra of matrix algebras as an explicit set of generators of $\NSp$. Letting $\cM$ be the full subcategory of $\NSp$ spanned by matrix algebras, we conclude that $\NSp$ is (monoidally) equivalent to $P_\Sp (\cM)$, the category of spectral presheaves on $\cM$. We give a strict model for $\cM$, denoted $\cM_s$, as a category enriched over a Quillen model category of spectra $\SpM$. We also show that the category of $\SpM$-enriched functors $\cM_s^{\op}\to\SpM$ with the projective model structure
models the $\infty$-category $\NSp$ and conclude that $\cM$ is equivalent to the enriched $\infty$-localization of $\cM_s$.

\subsubsection*{Acknowledgements}
We are grateful to Vladimir Hinich for explaining to us his theory of enriched infinity categories and its relevance to our work.

\section{The $\infty$-category of noncommutative CW-complexes}\label{ss:complexes}

In this section we define a noncommutative analogue of the $\infty$-category of pointed spaces defined by Lurie \cite{Lur1}.
%For reasons of convenience, throughout this work, we think of the category of (pointed) spaces as the topological nerve the topological category of (pointed) CW-complexes. Thus, if $\cC$ is any $\infty$-category and $x,y$ are objects of $\cC$, then $\Map_{\cC}(x,y)$ is a CW-complex.

Let $\Csep$ (resp. $\CCsep$) denote the category of all (resp. commutative) separable $C^*$-algebras and $*$-homomorphisms. Following the common convention in the field, the term $C^*$-algebra or $*$-homomorphism will always mean \emph{non-unital}. The  Gelfand  correspondence implies that the functor
$$X\mapsto\tC_0(X):\CMp\lrar\CCsep^{\op}$$
that assigns to every pointed compact metrizable space $X$ the commutative separable $C^{*}$-algebra of continuous complex valued functions on $X$ that vanish at the basepoint, is an equivalence of categories.
It is thus natural to regard $\Csep^{\op}$ as the category of \emph{noncommutative} pointed compact metrizable spaces.

Consider $\CMp$ as a topologically enriched category, where for every $X,Y\in\CMp$ we endow the set of pointed continuous maps $\CMp(X,Y)$ with the \emph{compact open topology}. Now we take the topological nerve \cite[Section 1.1.5]{Lur1} of this topological category and obtain the $\infty$-category $(\CMp)_\infty$.
It is well-known that $(\CMp)_\infty$ admits finite $\infty$-colimits and that $\infty$-pushouts can be calculated using the standard cylinder object.

Let us construct the $\infty$-category of pointed spaces in a way that admits a natural generalization to the noncommutative case.
We denote by $\FCW$ the smallest full subcategory of $\CMp$ that contains $S^0$ and is closed under finite homotopy-colimits using the standard cylinder object. Thus $\FCW$ is the topological category of finite pointed CW-complexes. We will also consider $\FCW$ as an $\infty$-category, by applying the coherent nerve functor to it. We will use the same notation $\FCW$ to indicate both the ordinary (topologically enriched) and the $\infty$-categorical incarnation of the category, trusting that it is clear from the context which is meant.
The $\infty$-category of pointed spaces can be defined as the $\infty$-categorical $\Ind$ construction of $\FCW$.
Note that under the  Gelfand  correspondence, $S^0$ corresponds to $\CC$, which is the only nonzero finite dimensional simple commutative $C^*$-algebra.

We now turn to the noncommutative analogue. We first recall  from \cite[Section 2.1]{Mah1} the construction of the $\infty$-category $\Csepi$.
Consider $\Csep$ as a topologically enriched category, where for every $A,B\in\Csep$ we endow the set of $*$-homomorphisms $\Csep(A,B)$ with the topology of pointwise norm convergence. Now we take the topological nerve of this topological category and obtain the $\infty$-category $\Csepi$. It is shown in \cite[Section 2.1]{Mah1} that $\Csepi$ is (essentially) small, pointed, and finitely complete.

\begin{rem}\label{r:coherent}
Recall that any relative category, that is a pair $(\cC,\cW)$ consisting of a category $\cC$ an a subcategory $\cW\subseteq\cC$, has a canonically associated $\infty$-category $\cC_\infty$, obtained by formally inverting the morphisms in $\cW$, in the infinity categorical sense. There is also a canonical localization functor $\cC\to\cC_\infty$ satisfying a universal property. We refer the reader to \cite{Hin1} for a thorough account, and also to the discussion in \cite[Section 2.2]{BHH}. We refer to $\cC_\infty$ as the $\infty$-localization of $\cC$ (with respect to $\cW$). If $\cC$ is a model category or a (co)fibration category, we always take $\cW$ to be the set of weak equivalences in $\cC$.

Using $\infty$-localization, there is another natural way of considering separable $C^*$-algebras as an $\infty$-category. There is a well known notion of homotopy equivalence between $C^*$-algebras. We can consider $\Csep$ as a relative category, with the weak equivalences given by the homotopy equivalences, and take its $\infty$-localization. This is the point of view taken, for instance, in \cite{AG,Uuy}. It follows from \cite[Proposition 3.17]{BJM} that we obtain an $\infty$-category equivalent to $\Csepi$.
\end{rem}

\begin{rem}\label{remark: cotensoring}
It is well-known that $\Csep$ is cotensored over the category of pointed finite CW-complexes~\cite{AG}. If $K$ is a finite pointed CW-complex and $A\in \Csep$ then the cotensoring of $A$ and $K$ is given by the $C^*$-algebra of pointed continuous functions from $K$ to $A$. One can define finite homotopy limits in $\Csep$ using this cotensoring. Consequently, the $\infty$-pullbacks in $\Csepi$ can be calculated as homotopy pullbacks using the standard path object~\cite[Proposition 2.7]{Mah1}.
\end{rem}

\begin{define}\label{definition: FNCW}
We denote by $\FNCW$ the opposite of the smallest full subcategory of $\Csep$ that contains the nonzero finite dimensional simple algebras in $\Csep$ (which are just the matrix algebras over $\CC$) and is closed under finite homotopy-limits using the standard path object. We call $\FNCW$ the category of \emph{finite pointed noncommutative CW-complexes}. The category $\FNCW$ is an ``ordinary'' topologically enriched category. We will also consider $\FNCW$ as an $\infty$-category, by applying the coherent nerve functor to it. Like in the commutative case, will use the same notation $\FNCW$ to indicate both the ordinary (topologically enriched) and the $\infty$-categorical incarnation of the category, trusting that it is clear from the context which is meant.
\end{define}
The topological category $\FNCW$ is the category of \emph{noncommutative CW-complexes} as defined in \cite[Section 2.4]{ELP}. Using \cite[Theorem 11.14]{Ped}, the same proof as in \cite[Proposition 1.1]{Mah2} gives that the (maximal) tensor product of $C^*$-algebras induces a symmetric monoidal structure on the $\infty$-category $\FNCW$, that preserves finite colimits in each variable separately. Note also that since the topological category $\Csep$ is cotensored over pointed finite CW-complexes, the topological category $\FNCW$ is tensored over pointed finite CW-complexes. We will denote the tensoring of a finite CW-complex $K$ and a noncommutative finite complex $X$ by $K\wedge X$.

We now define the $\infty$-category of \emph{noncommutative pointed CW-complexes} to be the $\infty$-categorical ind-completion of $\FNCW$,
$$\NCW:=\Ind(\FNCW).$$
The $\infty$-category $\FNCW$ is (essentially) small, pointed and finitely cocomplete so $\NCW$ is a compactly generated pointed $\infty$-category.  By \cite[Corollary 4.8.1.14]{Lur2} there is an induced closed symmetric monoidal structure on the $\infty$-category $\NCW\simeq\Ind(\FNCW)$ such that the natural embedding $j \colon \FNCW\lrar\NCW$ is symmetric monoidal.

In \cite{Mah1}, Mahanta defined the $\infty$-category $\N\cS_*$ as the ind-completion of $(\Csepi)^{\op}$, and called
 it the $\infty$-category of \emph{pointed noncommutative spaces}. By definition our category $\FNCW$ is a full subcategory of $(\Csepi)^{\op}$. Since $\infty$-pushouts in both $\FNCW$ and $(\Csepi)^{\op}$ can be calculated as homotopy pushouts using the standard cylinder object, we see that the inclusion $\FNCW\hookrightarrow(\Csepi)^{\op}$ commutes with finite colimits. Passing to ind-completions we get that the induced inclusion $\NCW\hookrightarrow\N\cS_*$ admits a right adjoint (or in other words, $\NCW$ is a coreflective full subcategory of $\N\cS_*$)
$$i\colon\NCW\adj\N\cS_*\noloc R.$$
We call a morphism $g\colon X\to Y$ in $\N\cS_*$ a \emph{weak homotopy equivalence} if $R(g)$ is an equivalence in $\NCW$, or equivalently, if for every object $W$ in $\NCW$ the induced map
     $$g_*\colon \Map_{\N\cS_*}(i(W),X)\to\Map_{\N\cS_*}(i(W),Y)$$
     is an equivalence in $\cS$.
     Since every object in $\NCW$ is a small colimit of matrix algebras and $i$ commutes with small colimits, we see that $g$ is a weak equivalence iff for every $n\geq 1$ the induced map
     $$g_*\colon \Map_{\N\cS_*}(M_n,X)\to\Map_{\N\cS_*}(M_n,Y)$$
     is an equivalence in $\cS$.
This is analogous to weak homotopy equivalences between topological spaces.
The counit of the adjunction above $i\circ R \to \id_{\N\cS_*}$ is a levelwise weak equivalence, and thus can be thought of as a CW approximation to elements in $\N\cS_*$. If $X$ and $Y$ are noncommutative CW-complexes then $g$ is a weak equivalence iff it is an equivalence in $\N\cS_*$.
Since the weak equivalences in $(\Csepi)^{\op}$ are the homotopy equivalences, the category $\N\cS_*$ is somewhat analogous to the infinity category modeled by the Str\o m model category on topological spaces \cite{Str}.

\section{Enriched infinity categories}\label{ss:enriched}

In Theorem \ref{t:modules0} we make use of enriched infinity categories. There are a few approaches to this theory (see, for example, \cite{GH,Lur2}) but so far only in \cite{Hin2} the Yoneda embedding is defined and its basic properties are shown. Since we need these results, we chose to follow Hinich's approach in this paper. In this section we give an overview of the basic definitions and constructions needed for later on. We also present some new material in subsection \ref{ss:enriched model}, concerning the connection between model categories and $\infty$-categories in the enriched setting.

Let $\Cat$ denote the $\infty$-category of $\infty$-categories, and let $\Catc$ denote the $\infty$-subcategory of $\Cat$ whose objects are $\infty$-categories having small colimits and whose morphisms preserve these colimits. The category $\Cat$ is symmetric monoidal under the cartesian product, while $\Catc$ has a symmetric monoidal structure induced from the cartesian structure on $\Cat$ (see \cite[4.8.1.3, 4.8.1.4]{Lur2}). With this structure on $\Catc$,
$\Map_{\Catc}(P\otimes L,M)$ is the subspace of $\Map_{\Cat}(P\times L,M)$ consisting of functors preserving small colimits along each argument. Note that a monoidal $\infty$-category is equivalent to an associative algebra object in $\Cat$, while an associative algebra in $\Catc$ is equivalent to a monoidal $\infty$-category with colimits, whose monoidal product commutes with colimits in each variable. We define a \emph{closed monoidal $\infty$-category} to be an associative algebra in $\Catc$.

If $\cM$ is a closed monoidal $\infty$-category, then a category left-tensored over $\cM$ is by definition a left module over $\cM$ in $\Catc$.
% We now turn to enriched $\infty$-categories.
More generally, if $\cO$ is an $\infty$-operad, an $\cO$-monoidal category is an algebra over $\cO$ in $\Catc$. If $\cM$ is an $\cO$-monoidal category one can define an $\cO$-algebra in $\cM$.

Let $\Ass$ be the associative operad and $\LM$ be the two colored operad of left modules.
Algebras over $\Ass$ are associative algebras and algebras over $\LM$ consist of an associative algebra and a left module over it.
Thus, an $\Ass$-monoidal category is just a closed monoidal $\infty$-category and an $\LM$-monoidal category is a pair consisting of a closed monoidal $\infty$-category and a category left-tensored over it.

Let $\cM$ be a closed monoidal $\infty$-category. For every space (i.e., an $\infty$-groupoid) $X$, Hinich constructs (see \cite[Sections 3 and 4]{Hin2}) a closed monoidal structure on the $\infty$-category of Quivers
$$\Quiv_X(\cM):=\Fun(X^{\op}\times X,\cM).$$  Hinich's monoidal structure is an $\infty$-categorical version of the usual convolution product that one uses to define ordinary enriched categories.
For $\cB$ a category left-tensored over $\cM$, Hinich constructs a left action of the closed monoidal $\infty$-category $\Quiv_X(\cM)$ on the $\infty$-category
$\Fun(X,\cB)$. In his notation we obtain an $\LM$-monoidal category
$$\Quiv^{\LM}_X(\cM,\cB):=(\Quiv_X(\cM),\Fun(X,\cB)).$$
%\begin{rem}
%  The constructions in \cite{Hin2} are more general then the ones presented here. In particular, $\Quiv_X(\cM)$ is defined to be an $\Ass$-operad for every category $X$ and every $\Ass$-operad $\cM$. Similarly, $\Quiv^{\LM}_X(\cM)$ is defined to be an $\LM$-operad for every category $X$ and every $\LM$-operad $\cM$. Furthermore, there is also a corresponding construction for the three colored operad of bimodules $\BM$. What we recalled here is enough for our purposes.
%\end{rem}
\begin{define}
A $\cM$-enriched category, with space of objects $X$ is an associative algebra in $\Quiv_X(\cM)$.
\end{define}
\begin{rem}
Hinich uses the term $\cM$-enriched {\it precategory} for an associative algebra in $\Quiv_X(\cM)$. He reserves the term $\cM$-enriched category for precategories satisfying a version of Segal completeness condition (see \cite[Definition 7.1.1]{Hin2}). In this paper we are not concerned with Segal completeness, so we will not distinguish between enriched categories and precategories. We will just say ``enriched category'' where Hinich might have said ``enriched precategory''. %{\color{red} In our applications, $X$ will be just a set rather than a space.}
\end{rem}

\begin{rem}
If $\cM$ is the $\infty$-category of spaces, then the category of $\cM$-enriched categories with space of objects $X$ is equivalent to the category of simplicial spaces satisfying the Segal condition and equalling $X$ in simplicial degree zero. See~\cite[Corollary 5.6.1]{Hin2}, where a more general statement is proved. In other words, a category enriched in spaces is the same thing as an ordinary $\infty$-category. For a general closed monoidal $\infty$-category $\cM$, there is a monoidal ``forgetful'' functor from $\cM$ to spaces, given by $\Map_{\cM}(\mathtt{1}, -)$. In this way we obtain a forgetful functor from the category of $\cM$-enriched categories to ordinary infinity categories (compare with~\cite[Definition 7.1.1]{Hin2}).
\end{rem}

\begin{rem}\label{r:enriched model}
As we show in next subsection \ref{ss:enriched model}, the theory of monoidal model categories and categories enriched or tensored over them extends nicely to the theory presented above upon application of $\infty$-localization.
\end{rem}

In \cite[Section 6]{Hin2} Hinich defines the notion of an $\cM$-functor from an $\cM$-enriched category to a category left-tensored over $\cM$. Let $\cA$ be an $\cM$-enriched category with space of objects $X$ and $\cB$ a category left-tensored over $\cM$. Then $\cA$ is an associative algebra in $\Quiv_X(\cM)$ and $\Fun(X,\cB)$ is left tensored over $\Quiv_X(\cM)$. An $\cM$-functor $\cA\to\cB$ is defined to be a left module over $\cA$ in $\Fun(X,\cB)$, and the $\infty$-category of $\cM$-functors $\cA\to\cB$ is defined to be
$$\Fun_{\cM}(\cA,\cB):=\LMod_{\cA}(\Fun(X,\cB)).$$

%Distinguishing between categories left tensored over $\cM$ and categories enriched over $\cM$ is a key component of Hinich's treatment of enriched $\infty$-categories. In \cite[Section 6]{Hin2} Hinich defines the notion of an $\cM$-functor from an $\cM$-enriched category to a category left-tensored over $\cM$. Let $\cA$ be an $\cM$-enriched category with space of objects $X$ and $\cB$ a category left-tensored over $\cM$. Then $\cA$ is an associative algebra in $\Quiv_X(\cM)$ and $\Fun(X,\cB)$ is left tensored over $\Quiv_X(\cM)$. An $\cM$-functor $\cA\to\cB$ is defined to be a left module over $\cA$ in $\Fun(X,\cB)$, and the $\infty$-category of $\cM$-functors $\cA\to\cB$ is defined to be
%$$\Fun_{\cM}(\cA,\cB):=\LMod_{\cA}(\Fun(X,\cB)).$$

For $x, y$ objects of $\cB$, we define the presheaf $\Hom_\cB(x,y)\in P(\cM)$ by
$$\Hom_\cB(x,y)(K):=\Map_\cB(K\otimes x,y).$$
Clearly $\Hom_\cB(x,y):\cM^{\op}\to\cS$ preserves limits, but it is not necessarily representable. If it happens to be representable, then the representing object serves as an internal mapping object from $b$ to $c$. Every $\cM$-functor $F:\cA\to\cB$ induces maps in $P(\cM)$
$$h_{\cA(x,y)}\to\Hom_\cB(F(x),F(y))$$
for $x,y\in X$.
The $\cM$-functor $F$ is called $\cM$-fully faithful if all these maps are equivalences.

If $\Hom_\cB(b,c):\cM^{\op}\to\cS$ is representable for all objects $b, c$ of $\cB$, then $\cB$ is enriched as well as left tensored. More generally and more precisely, Hinich proves the following proposition (see~\cite[Proposition 6.3.1 and Corollary 6.3.4]{Hin2}). We note that if $\cM$ is presentable, then any functor $\cM^{\op}\to\cS$ that preserves limits is representable (see \cite[Proposition 5.5.2.2]{Lur1}).

\begin{prop}\label{proposition: enrichment}
Let $\cM$ be a closed monoidal $\infty$-category and $\cB$ left tensored over $\cM$. Let $C$ be a class of objects of $\cB$. If for all $x, y\in C$, the presheaf $\Hom_{\cB}(x, y)$ is representable then there exists an $\cM$-enriched category $\cC$ whose class of objects is $C$, such that for any two objects $x, y$ of $C$, the morphism object $\cC(x, y)$ is a representing object for the functor $\Hom_{\cB}(x, y)$. There is
a fully-faithful $\cM$-enriched functor $\cC\to \cB$, extending the inclusion of $C$ into $\cB$.
\end{prop}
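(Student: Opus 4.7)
The plan is to build the $\cM$-enriched category $\cC$ by hand from the representing objects, use the universal property to equip it with an associative composition law, and then promote the inclusion $C\hookrightarrow\cB$ to an $\cM$-functor by taking adjoints of the tensoring. Throughout, let $X$ denote the $\infty$-groupoid spanned by $C$ inside the underlying $\infty$-category of $\cB$.

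First I would define the object of $\Quiv_X(\cM)=\Fun(X^{\op}\times X,\cM)$ underlying $\cC$. For $x,y\in C$ the representability hypothesis furnishes an object $\cC(x,y)\in\cM$ together with a natural equivalence $\Map_\cM(K,\cC(x,y))\simeq\Map_\cB(K\otimes x,y)$. Since representing objects are unique up to contractible choice and depend functorially on the represented presheaf, the assignment $(x,y)\mapsto\cC(x,y)$ assembles into a functor $X^{\op}\times X\to\cM$; in other words, we obtain the underlying quiver.

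Next I would construct the multiplication. The composition law in the left-tensored category $\cB$ gives, for $x,y,z\in C$ and $K,L\in\cM$, a natural map
\[
\Map_\cB(L\otimes y,z)\times\Map_\cB(K\otimes x,y)\longrightarrow\Map_\cB(L\otimes K\otimes x,z),\qquad (g,f)\mapsto g\circ(L\otimes f).
\]
By the defining adjunction this is the same data as a natural transformation
\[
\Map_\cM(L,\cC(y,z))\times\Map_\cM(K,\cC(x,y))\longrightarrow\Map_\cM(L\otimes K,\cC(x,z)),
\]
and by Yoneda this corresponds to a morphism $\cC(y,z)\otimes\cC(x,y)\to\cC(x,z)$ in $\cM$. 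To upgrade these composition maps to a genuine associative algebra structure on $\cC$ in $\Quiv_X(\cM)$, I would invoke the many-object version of Lurie's endomorphism algebra construction (HA~4.7), adapted to Hinich's framework: the $\LM$-algebra $(\cM,\cB)$ in $\Catc$ produces, by an endomorphism-operad construction parameterized by the space $X$, an associative algebra in $\Quiv_X(\cM)$ whose underlying quiver is precisely the one built above.

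Finally, to produce the $\cM$-functor $\cC\to\cB$, recall it is by definition a left $\cC$-module in $\Fun(X,\cB)$. I would take the underlying functor to be the inclusion $i\colon X\hookrightarrow\cB$, and the module action to consist of maps $\cC(x,y)\otimes i(x)\to i(y)$ obtained as the image of $\id_{\cC(x,y)}$ under the adjunction $\Map_\cM(\cC(x,y),\cC(x,y))\simeq\Map_\cB(\cC(x,y)\otimes x,y)$. Compatibility with composition is forced by the construction of the algebra structure, and coherence again falls out of the endomorphism-operad machinery in the previous step. Fully faithfulness is then tautological: by construction the presheaf $h_{\cC(x,y)}$ and the presheaf $\Hom_\cB(i(x),i(y))$ both represent the same functor $\cM^{\op}\to\cS$, so the canonical map $h_{\cC(x,y)}\to\Hom_\cB(i(x),i(y))$ is an equivalence. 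The main obstacle is precisely the coherence: producing composition and action maps at the level of homotopy is automatic from the universal property, but assembling them into honest associative algebra and module structures at the $\infty$-categorical level is where the bulk of the work sits, and it is handled cleanly only by the operadic endomorphism construction.
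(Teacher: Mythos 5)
Your construction via the $X$-parameterized endomorphism-object machinery is exactly how this proposition is established in the source the paper relies on: the paper gives no proof of its own but cites Hinich's Proposition 6.3.1 and Corollary 6.3.4, which realize $\cC$ as the endomorphism algebra of the inclusion $i\colon X\to\cB$ viewed as an object of $\Fun(X,\cB)$ left-tensored over $\Quiv_X(\cM)$, with the tautological module structure on $i$ supplying the fully faithful $\cM$-functor. Your sketch is a faithful outline of that argument, so it is correct and takes essentially the same route.
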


Using \cite[Lemma 6.3.3]{Hin2} it is not hard to see that given the conditions of Theorem \ref{proposition: enrichment} all the categories $\cC$ that can be obtained are canonically equivalent (via the choice of $X$ as the full subspace of $\cB$ spanned by $C$ in \cite[Corollary 6.3.4]{Hin2}). We can thus call $\cC$ \textbf{the} full enriched subcategory of $\cB$ spanned by $C$.
Taking $C$ to be the class of all objects in $\cB$ we see that if $\Hom_{\cB}(x, y)$ is representable for all $x, y$, then $\cB$ is enriched as well as tensored over $\cM$.

%
%Every $\cM$-functor $F:\cA\to\cB$ induces maps in $P(\cM)$
%$$h_{\cA(x,y)}\to\Hom_\cB(F(x),F(y))$$
%for $x,y\in X$. {\color{red} Here $h_{\cA(x,y)}$ denotes the representable presheaf associated with the $\cM$-object $\cA(x, y)$}.
%The $\cM$-functor $F$ is called {\it $\cM$-fully faithful if} all these maps are equivalences of presheaves for all $x,y$.

% The underlying functor $X\to P_\cM(\cA)$
% and we have an $\cM$-functor $\cA\to \cB$, then there exists a unique (up to a contractible choice) functor of $\infty$-categories left-tensored over $\cM$, $P_{\cM}(\cA)\to \cB$, such that the following diagram commutes
% $$\xymatrix{\cA\ar[r]\ar[dr] & {P}_{\cM}(\cA)\ar[d]\\ & \cB.}$$

\subsubsection{From enriched model categories to enriched infinity categories}\label{ss:enriched model}

Let $\Cat_1$ denote the category of small categories and functors between them and let $\sS$ denote the category of simplicial sets. We have the usual nerve functor
$$\N:\Cat_1\to\sS.$$
The functor $\N$ is limit preserving and in particular it is a (cartesian) monoidal functor.

In \cite[Section 2.1]{Hor}, Horel constructs a (large, coloured) $\Cat_1$-operad denoted $\MC$. The colours in $\MC$ are model categories, while the category of multilinear operations $\Map_{\MC}(\cM_1,\cdots,\cM_n,\cN)$ is the category of left Quillen $n$-functors
$$\cM_1\times\dots\times\cM_n\to\cN$$
and natural weak equivalences (on cofibrant objects) between them. Since $\N$ is a monoidal functor, we obtain a simplicial operad from $\MC$ by composing with $\N$. We denote this simplicial operad also by $\MC$.

Let $\sS^{\Delta^{\op}}$ denote the category of simplicial objects in $\sS$ with Rezk's model structure. This is a combinatorial simplicial cartesian closed symmetric monoidal model category with all objects cofibrant. Let $\CSS$ denote the full simplicial subcategory of $\sS^{\Delta^{\op}}$ spanned by the fibrant objects. Then $\CSS$ is a monoidal simplicial category (under the cartesian product) whose simplicial nerve is naturally equivalent to the monoidal $\infty$-category $\Cat$.
Horel also constructs in [loc. cit.] another full simplicial subcategory $\Cat_\infty\subseteq\sS^{\Delta^{\op}}$ containing $\CSS$ and closed under the cartesian product. He shows that the inclusion $\CSS\to\Cat_\infty$ induces an equivalence of $\infty$-categories after application of simplicial nerve.

Let $\MC^c$ be the full sub simplicial operad of $\MC$ on model categories that are Quillen equivalent to a combinatorial model category.
For $\cM\in \MC^c$, let $\N_\cR(\cM)$ denote the Resk nerve construction on the cofibrant objects in $\cM$ and weak equivalences between them. By \cite[Theorem 2.16]{Hor}, $\N_\cR$ extends to a map of simplicial operads
$\MC^c\to \Cat_\infty$. Applying simplicial nerve, we obtain a map of $\infty$-operads
$\N_\cS(\MC^c)\to \Cat$. By \cite[Remark 2.17]{Hor}, this map factors through $\Catc$. Since Resk's nerve is one of the models for $\infty$-localization (see, for example, \cite[Section 2.2]{BHH}), we obtain a map of $\infty$-operads $$(-)_\infty:\N_\cS(\MC^c)\to\Catc$$
which acts as $\infty$-localization on objects.

%Let $\PrL$ denote the full symmetric monoidal subcategory of $\Catc$, whose objects are presentable $\infty$-categories.

Let $\M$ be the nonsymmetric operad (in $\Set$) freely generated by an operation in degree 0 and 2.
An algebra over $\M$ in $\Set$ is a set with a binary multiplication and a base point. Let $\P$ be the operad
in $\Cat_1$ which is given in degree $n$ by the groupoid whose objects are points of $\M(n)$ and a with a unique
morphism between any two objects. Then an algebra over $\P$ in $\Cat_1$ is a monoidal category.
The nerve of $\P$ is a simplicial operad which we also denote by $\P$. Clearly, we have an equivalence of $\infty$-operads $\N_\cS\P\simeq\Ass$.

Let $\cM$ be a monoidal model category, Quillen equivalent to a combinatorial model category. Then $\cM$ is an algebra over $\P$ in $\MC^c$ (as operads in $\Cat_1$ and thus in $\sS$). Applying the simplicial nerve we get that $\cM$ is an algebra over $\N_\cS\P\simeq \Ass$ in $\N_\cS(\MC^c)$. It follows that $\cM_\infty$ is an algebra over $\Ass$ in $\Catc$, so  $\cM_\infty$ is a presentable closed monoidal $\infty$-category. Furthermore, the localization functor
$$\cM\to\cM_\infty$$
is lax monoidal.

Now let $\cC$ be a model category, Quillen equivalent to a combinatorial one. Suppose that $\cC$ is an $\cM$-model category, in the sense that $\cC$ is tensored closed over $\cM$ and satisfies the Quillen SM7 axiom.
As above, we can construct a simplicial operad $\Q$ whose simplicial nerve is equivalent to $\LM$ and such that $(\cM,\cC)$ is an algebra over $\Q$ in $\MC^c$.
Applying the simplicial nerve we get that $(\cM,\cC)$ is an algebra over $\N_\cS\Q\simeq \LM$ in $\N_\cS(\MC^c)$.
It follows that $(\cM_\infty,\cC_\infty)$ is an algebra over $\LM$ in $\Catc$, so $\cC_\infty$ is a presentable $\infty$-category left tensored over $\cM_\infty$. Furthermore, the localization functor
$$(\cM,\cC)\to(\cM_\infty,\cC_\infty)$$
is $\LM$-lax monoidal.

Let $(-)^f$ and $(-)^c$ denote fibrant and cofibrant replacement functors in a model category. The model category $\cC$ is an $\cM$-model category so for every $A\in\cC$ we have a Quillen pair
$$(-)\otimes A^c:\cM\adj\cC:\Hom_\cC(A^c,-).$$
By \cite{Maz} we have an induced adjunction of $\infty$-categories
$$\mathbb{L}(-)\otimes A^c:\cM_\infty\adj\cC_\infty:\RR \Hom_\cC(A^c,-).$$
Thus we have equivalences natural in $A,B\in\cC$:
$$\Map_{\cM_\infty}(K,\RR \Hom_\cC(A^c,B))\simeq \Map_{\cC_\infty}(\mathbb{L}K\otimes A^c,B)$$
Clearly $\mathbb{L}(-)\otimes A^c:\cM_\infty\to\cC_\infty$ represents the tensor product $(-)\otimes A$ of $\cC_\infty$ as tensored over $\cM_\infty$ so we have natural equivalences
$$\Map_{\cM_\infty}(K,\Hom_\cC(A^c,B^f))\simeq \Map_{\cC_\infty}(K\otimes A,B).$$
We see that $\Hom_\cC(A^c,B^f)\in \cM_\infty$ is a representing object for $\Hom_{\cC_\infty}(A,B)\in P(\cM_\infty)$ and thus we have
$$\Hom_\cC(A^c,B^f)\simeq \Hom_{\cC_\infty}(A,B).$$

\begin{define}\label{d:enriched localization}
Let $\cA$ be a strictly enriched category over $\cM$. Let $X$ denote the discrete space on the set of objects of $\cA$. Then $\cA$ is an algebra over $\Ass$ in the monoidal category $\Quiv_X(\cM)$ (see \cite{Hin4}).
The localization functor
$$\cM\to\cM_\infty$$
is lax monoidal, so the induced functor
$$\Quiv_X(\cM)\to\Quiv_X(\cM_\infty)$$
is also lax monoidal. We define the \emph{enriched $\infty$-localization} functor to be composition with this last functor:
$$(-)_\infty:\Alg_{\Ass}(\Quiv_X(\cM))\to \Alg_{\Ass}(\Quiv_X(\cM_\infty)).$$
Thus, $\cA_\infty$ is an enriched $\infty$-category over $\cM_\infty$.

Let $F:\cA\to\cC$ be a strict $\cM$-functor.
We have an $\LM$-monoidal category
$$\Quiv^{\LM}_X(\cM,\cC):=(\Quiv_X(\cM),\Fun(X,\cC))$$
and $F$ is just a module in $\Fun(X,\cC)$ over $\cA$ (see \cite{Hin4}). In this case $(\cA,F)$ is an algebra over $\LM$ in the $\LM$-monoidal category $\Quiv^{\LM}_X(\cM,\cC)$.
The localization functor
$$(\cM,\cC)\to(\cM_\infty,\cC_\infty)$$
is $\LM$-lax monoidal, so the induced functor
$$\Quiv^{\LM}_X(\cM,\cC)\to\Quiv^{\LM}_X(\cM_\infty,\cC_\infty)$$
is also $\LM$-lax monoidal. It follows that we obtain a functor that we denote
$$(-)_\infty:\Alg_{\LM}(\Quiv^{\LM}_X(\cM,\cC))\to \Alg_{\LM}(\Quiv^{\LM}_X(\cM_\infty,\cC_\infty)).$$
Clearly this functor lifts the functor above so we have
$$(\cA,\cF)_\infty=(\cA_\infty,F_\infty).$$
Thus, $F_\infty$ is an $\cM_\infty$-functor from $\cA_\infty$ to $\cC_\infty$ and we call it the \emph{enriched $\infty$-localization} of $F$.

We call $F$ \emph{homotopy fully faithful} if for every $x,y\in X$ the composition
$$\cA(x,y)\xrightarrow{F} \Hom_\cC(F(x),F(y))\to \Hom_\cC(F(x)^c,F(y)^f)$$
is an equivalences in the model category $\cM$.
\end{define}

\begin{thm}
  Let $\cA$ be a strictly enriched category over $\cM$ and let $F:\cA\to\cC$ be a strict $\cM$-functor which is homotopy fully faithful. Then the $\cM_\infty$-functor $F_\infty:\cA_\infty\to \cC_\infty$ is $\cM_\infty$-fully faithful (in the sense described after Remark \ref{r:enriched model}).
\end{thm}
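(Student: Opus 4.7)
The plan is to unwind the definition of $\cM_\infty$-fully faithful in terms of representing objects, match these representing objects with the strictly defined enriched hom objects appearing in the homotopy fully faithfulness hypothesis, and then observe that the map in question is the $\infty$-localization of the strict comparison map.

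First, I would fix objects $x,y\in X$ and analyze the map
\[ h_{\cA_\infty(x,y)}\to \Hom_{\cC_\infty}(F_\infty(x),F_\infty(y)) \]
in $P(\cM_\infty)$. Since $\cM_\infty$ is presentable and $\cC_\infty$ is left tensored over $\cM_\infty$ with the tensor commuting with colimits in each variable, the presheaf $\Hom_{\cC_\infty}(F_\infty(x),F_\infty(y))$ is representable. Thus, by the Yoneda lemma, the above map is an equivalence in $P(\cM_\infty)$ if and only if its adjoint map between representing objects in $\cM_\infty$ is an equivalence. By the computation carried out immediately before Definition \ref{d:enriched localization}, the representing object on the right is canonically equivalent to $\Hom_\cC(F(x)^c,F(y)^f)$, viewed as an object of $\cM_\infty$. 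On the other hand, by the construction of enriched $\infty$-localization, $\cA_\infty(x,y)$ is obtained from $\cA(x,y)$ by applying the lax monoidal localization functor $\cM\to\cM_\infty$ at the level of the underlying quiver, so it is simply $\cA(x,y)$ regarded in $\cM_\infty$.

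Second, I would verify that under these identifications, the structure map
\[ \cA_\infty(x,y)\to \Hom_\cC(F(x)^c,F(y)^f) \]
in $\cM_\infty$ induced by the enriched $\infty$-localized functor $F_\infty$ is precisely the image in $\cM_\infty$ of the composite
\[ \cA(x,y)\xrightarrow{F}\Hom_\cC(F(x),F(y))\to \Hom_\cC(F(x)^c,F(y)^f) \]
appearing in the definition of homotopy fully faithfulness. This is essentially formal once one unpacks the construction: the pair $(\cA,F)$ is an $\LM$-algebra in $\Quiv^{\LM}_X(\cM,\cC)$, and $(\cA_\infty, F_\infty)$ is obtained by postcomposition with the $\LM$-lax monoidal localization $(\cM,\cC)\to(\cM_\infty,\cC_\infty)$, so at the level of hom-objects this is exactly the $\infty$-localization of the strict hom-map, further composed with the canonical map from $\Hom_\cC(F(x),F(y))$ to its $(c,f)$-resolution, which becomes an equivalence after localization.

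Finally, the hypothesis that $F$ is homotopy fully faithful says precisely that the strict composite above is a weak equivalence in $\cM$, hence its image is an equivalence in $\cM_\infty$. Combined with the identifications of the previous paragraph, this says that the adjoint map between representing objects is an equivalence in $\cM_\infty$, so the original map of presheaves is an equivalence, i.e., $F_\infty$ is $\cM_\infty$-fully faithful.

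The main obstacle I expect is the second step, namely carefully justifying that the map between representing objects induced by $F_\infty$ really is the $\infty$-localization of the stated strict comparison map, rather than something that merely has the same source and target. This requires tracing through Hinich's description of $\cM$-functors as modules in $\Fun(X,\cB)$ over $\cA$ and checking that representability of $\Hom_{\cC_\infty}$ combined with the cofibrant-fibrant replacement identification assembles into the expected map in a coherent way.
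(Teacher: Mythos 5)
Your proposal is correct and follows essentially the same route as the paper: the paper's proof is a commutative square in $\cM_\infty$ comparing $L(\cA(x,y))\to L(\Hom_\cC(F(x),F(y)))$ with $\cA_\infty(x,y)\to\Hom_{\cC_\infty}(F_\infty(x),F_\infty(y))$, identifying the right-hand vertical map with $L$ applied to $\Hom_\cC(F(x),F(y))\to\Hom_\cC(F(x)^c,F(y)^f)$ exactly as you do via the computation preceding Definition \ref{d:enriched localization}. The coherence issue you flag in your second step is precisely what the paper's square encodes (and the paper, too, asserts its commutativity from the $\LM$-lax monoidality of the localization rather than spelling it out further).
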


\begin{proof}
Lat $L:\cM\to\cM_\infty$ denote the localization functor. Then for every $x,y\in X$ we have a commutative square in $\cM_\infty$
$$\xymatrix{L(\cA(x,y))\ar[drr]\ar[rr]^{LF}\ar[d] & & L(\Hom_{\cC}(F(x),F(y)))\ar[d]\\
\cA_\infty(x,y)\ar[rr]^{F_\infty} & & \Hom_{\cC_\infty}(F_\infty(x),F_\infty(y)).}$$
The left map is an equivalence by definition of $\cA_\infty$ and the right map is equivalent to $L$ applied to the map $\Hom_\cC(F(x),F(y))\to \Hom_\cC(F(x)^c,F(y)^f)$.
Since $F$ is homotopy fully faithful, the diagonal map is an equivalence, and thus also the bottom map.
\end{proof}

\begin{cor}\label{c:fully faithful}
  Let $\cA$ be a strictly enriched category over $\cM$ and let $F:\cA\to\cC$ be a strict fully faithful $\cM$-functor that lands in the fibrant cofibrant objects. Then the $\cM_\infty$-functor $F_\infty:\cA_\infty\to \cC_\infty$ is $\cM_\infty$-fully faithful.
\end{cor}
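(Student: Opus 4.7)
The plan is to deduce this corollary from the preceding theorem, so the main task is to verify that under the hypotheses of the corollary (strict fully faithfulness together with $F$ landing in fibrant cofibrant objects) the functor $F$ is already \emph{homotopy fully faithful} in the sense of Definition \ref{d:enriched localization}. Once homotopy fully faithfulness is established, the previous theorem immediately gives the desired $\cM_\infty$-fully faithfulness of $F_\infty$.

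To check homotopy fully faithfulness, fix $x,y\in X$ and consider the composite
\[
\cA(x,y)\xrightarrow{F}\Hom_{\cC}(F(x),F(y))\to \Hom_{\cC}(F(x)^c,F(y)^f).
\]
The first arrow is an isomorphism in $\cM$ because $F$ is strictly fully faithful, so the issue is purely to show the second arrow is a weak equivalence. I would factor this second arrow as
\[
\Hom_{\cC}(F(x),F(y))\to \Hom_{\cC}(F(x),F(y)^f)\to \Hom_{\cC}(F(x)^c,F(y)^f),
\]
using the fibrant replacement $F(y)\to F(y)^f$ in the second variable first, and the cofibrant replacement $F(x)^c\to F(x)$ in the first variable second.

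Since $F(x)$ and $F(y)$ are fibrant cofibrant by hypothesis, the maps $F(x)^c\to F(x)$ and $F(y)\to F(y)^f$ are weak equivalences between fibrant cofibrant objects. The SM7 axiom for the $\cM$-model structure on $\cC$ (equivalently, Ken Brown's lemma applied to the two-variable Quillen adjunction $(-)\otimes(-)\dashv \Hom_{\cC}(-,-)$) then implies that each of the two displayed maps is a weak equivalence in $\cM$: the first because $F(x)$ is cofibrant and $F(y)\to F(y)^f$ is a weak equivalence between fibrant objects, and the second because $F(y)^f$ is fibrant and $F(x)^c\to F(x)$ is a weak equivalence between cofibrant objects. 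Composing, the whole defining composite is a weak equivalence, so $F$ is homotopy fully faithful and the previous theorem applies.

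The only mild subtlety is to make sure the factorization above is legitimate inside the strict $\cM$-enriched setting and that the two-sided version of SM7 is really what is used; there is no significant obstacle, since this is the standard fact that in any $\cM$-model category $\Hom_{\cC}$ is a right Quillen bifunctor and therefore preserves weak equivalences in each variable when the other variable is held at a fibrant, respectively cofibrant, object. With this verification in place, the corollary follows formally from the theorem.
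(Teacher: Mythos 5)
Your proof is correct and follows exactly the route the paper intends: the corollary is deduced from the preceding theorem by checking that strict fully faithfulness plus landing in fibrant cofibrant objects forces $F$ to be homotopy fully faithful, via the standard Ken Brown / right-Quillen-bifunctor argument for $\Hom_{\cC}(-,-)$. The paper omits this verification entirely, and your filled-in details are accurate.
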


\subsubsection*{Enriched Yoneda Lemma} Hinich formulates and proves a version of enriched Yoneda lemma, which is of key importance to us. We will review this part of Hinich's work next.

Let $\cM$ be a closed monoidal $\infty$-category and let $\cA$ be an $\cM$-enriched category with space of objects $X$. Hinich defines the opposite category $\cA^{\op}$, which is an $\cM^{rev}$-enriched category with space of objects $X^{\op}$, and constructs a structure of a category left-tensored over $\cM$ on the $\infty$-category of $\cM$-presheaves
$$P_{\cM}(\cA):=\Fun_{\cM^{rev}}(\cA^{\op},\cM).$$
Here, $\cM$ is considered as a right $\cM$-module which is the same as a left $\cM^{rev}$-module.
\begin{rem}
In the case of interest to us, $\cM$ is the category of spectra, which is a {\it symmetric} monoidal category. This means that there is a canonical equivalence of monoidal categories $\cM\simeq \cM^{rev}$.
\end{rem}
% =\LMod_{\cA^{\op}}(\Fun(X^{\op},\cM))
Hinich also constructs an $\cM$-fully faithful functor called the enriched Yoneda embedding
$$Y:\cA\to P_{\cM}(\cA).$$
In \cite{Hin3} it is shown that this construction has the following universal property: If $\cB$ is any category left-tensored over $\cM$ then precomposition with $Y$ induces an equivalence
$$\Map_{\LMod_{\cM}}(P_{\cM}(\cA),\cB)\simeq\Map_{\cM}(\cA,\cB).$$

In \cite{Hin3}, all the above is done more generally relative to an $\infty$-operad $\cO$. Taking $\cO=\Com$ to be the terminal $\infty$-operad and noting that $\Com\otimes\Ass\simeq \Com$, we obtain the following. Suppose $\cM$ is a closed symmetric monoidal $\infty$-category. Then the category of $\cM$-left-tensored categories is symmetric monoidal and we define a symmetric monoidal $\cM$-left-tensored category to be a commutative algebra in the category of $\cM$-left-tensored categories. Similarly, the category of $\cM$-enriched categories is symmetric monoidal and we define a symmetric monoidal $\cM$-enriched category to be to be a commutative algebra in the category of $\cM$-enriched categories. Moreover, one can define the notion of a symmetric monoidal $\cM$-functor from a symmetric monoidal $\cM$-enriched category to a symmetric monoidal $\cM$-left-tensored category.

If $\cA$ is a symmetric monoidal $\cM$-enriched category, the category of presheaves $P_\cM (\cA)$ acquires a canonical symmetric monoidal $\cM$-left-tensored structure (Day convolution), and the Yoneda embedding $Y : \cA \to P_\cM (\cA)$ acquires a structure of a symmetric monoidal $\cM$-functor. Moreover, this construction has the following universal property: If $\cB$ is any symmetric monoidal $\cM$-left-tensored category then precomposition with $Y$ induces an equivalence
$$\Map^\Com_{\LMod_{\cM}}(P_{\cM}(\cA),\cB)\simeq \Map^\Com_{\cM}(\cA,\cB).$$

\section{Stable $\infty$-categories and spectral presheaves}\label{s:stab_infinity}
In this section we consider the notion of stable $\infty$-categories. We show that a compactly generated stable $\infty$-category is equivalent to $P_\Sp(\cA)$ for some small $\Sp$-enriched category $\cA$.

Let $\cD$ be a pointed finitely cocomplete $\infty$-category. We define the \emph{suspension functor} on $\cD$
$$\Sigma_\cD\colon \cD\to \cD$$
by the formula
$$\Sigma_\cD(X):=*\coprod_X *.$$
Alternatively, the suspension functor can be defined as the smash product $S^1\wedge X$, using the fact that a pointed finitely cocomplete $\infty$-category is tensored over pointed spaces.

If the suspension functor is an equivalence of categories, then $\cD$ is called \emph{stable}. %If, in addition, $\cD = \cD_{\mathbb{Q}}$ we say that $\cD$ is rational.
 A stable presentable $\infty$-category is naturally left-tensored over the closed monoidal $\infty$-category of spectra $\Sp$ (see \cite[Proposition 4.8.2.18]{Lur2}). Moreover, $\Sp$ is presentable, so for every $b,c\in\cD$ the presheaf $\Hom_\cD(b,c)\in P(\Sp)$ is representable (we will denote the representing object also by $\Hom_\cD(b,c)\in\Sp$). By Proposition~\ref{proposition: enrichment} it follows that a stable presentable $\infty$-category is canonically enriched over $\Sp$ (this was observed by Gepner-Haugseng in~\cite[Example 7.4.14]{GH}, where they also pointed out that the presentability assumption is unnecessary).

 %Similarly, a stable rational  presentable $\infty$-category is naturally left-tensored over the closed monoidal $\infty$-category $\Sp_{\mathbb{Q}}$.

\begin{thm}\label{t:modules}
  Let $\cD$ be a cocomplete stable $\infty$-category. Suppose that there is a small set $C$ of compact objects in $\cD$, that generates $\cD$ under colimits and desuspensions. Thinking of $\cD$ as left-tensored over $\Sp$, we let $\cC$ be the full $\Sp$-enriched subcategory of $\cD$ spanned by $C$. Then we have a natural functor of categories left-tensored over $\Sp$
  $$P_{\Sp}(\cC)\xrightarrow{\sim}\cD,$$
  which is an equivalence of the underlying $\infty$-categories and sends each representable presheaf $Y(c) \in P_{\Sp}(\cC)$ to $c\in C$.
\end{thm}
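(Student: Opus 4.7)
\textbf{Proof plan for Theorem \ref{t:modules}.}

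The strategy is to first construct $F$ via the enriched Yoneda universal property, and then verify it is an equivalence by building its right adjoint and checking unit and counit on the two sets of compact generators. Since both $\cD$ and $P_\Sp(\cC)$ are compactly generated stable $\infty$-categories left-tensored over $\Sp$, and since $F$ will be $\Sp$-linear and colimit-preserving, it will suffice to compare $F$ with an adjoint-functor-theorem right adjoint.

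First, I would construct $F$. By Proposition \ref{proposition: enrichment} the set $C\subseteq\cD$ determines a full $\Sp$-enriched subcategory $\cC$ together with a fully faithful $\Sp$-enriched functor $\iota\colon\cC\hookrightarrow\cD$. The universal property of the enriched Yoneda embedding (the $\LMod_\Sp$-version stated at the end of Section \ref{ss:enriched}) then produces a colimit-preserving $\Sp$-linear functor
\[
F\colon P_\Sp(\cC)\lrar \cD
\]
together with an equivalence $F\circ Y\simeq \iota$; in particular $F(Y(c))\simeq c$ for every $c\in C$. Because $F$ preserves colimits between presentable stable $\infty$-categories, it admits a right adjoint $G\colon\cD\to P_\Sp(\cC)$. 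Unwinding the adjunction along representables and using the $\Sp$-enriched Yoneda lemma one identifies $G(d)$ with the spectral presheaf $c\mapsto \Hom_\cD(c,d)$.

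Next I would show that $G$ itself preserves all colimits. Each $c\in C$ is compact in $\cD$, so $\Hom_\cD(c,-)$ preserves filtered colimits; combined with the exactness forced by stability, $\Hom_\cD(c,-)$ preserves arbitrary colimits, so $G$ does too since colimits in $P_\Sp(\cC)$ are computed objectwise. With this in hand, consider the unit $\eta\colon \id\to GF$. On each representable $Y(c)$ one has $GF(Y(c))\simeq G(c)$, which is the presheaf $c'\mapsto \Hom_\cD(c',c)\simeq \cC(c',c)\simeq Y(c)(c')$ by fully faithfulness of $\iota$ and the $\Sp$-enriched Yoneda lemma; tracing through, $\eta_{Y(c)}$ is an equivalence. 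Dually consider the counit $\varepsilon\colon FG\to\id$; on $c\in C$ it becomes $F(Y(c))\to c$, which is an equivalence by construction.

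Finally I would globalize. The full subcategory of $P_\Sp(\cC)$ on which $\eta$ is an equivalence is closed under all colimits and desuspensions (both $\id$ and $GF$ preserve them) and contains all $Y(c)$; since representable spectral presheaves generate $P_\Sp(\cC)$ under colimits and desuspensions, $\eta$ is an equivalence. Similarly the full subcategory of $\cD$ on which $\varepsilon$ is an equivalence is closed under colimits and desuspensions and contains $C$, so by the hypothesis on $C$ it is all of $\cD$. Hence $F$ is an equivalence of underlying $\infty$-categories, and it is $\Sp$-linear by construction. The step I expect to be the main obstacle is checking carefully that the adjoint $G$ produced abstractly really coincides with the pointwise $\Hom_\cD$-presheaf, and that the resulting unit on $Y(c)$ is identified with the identity through enriched Yoneda; this is where one needs to keep track of the enriched structure rather than only the underlying $\infty$-categorical data.
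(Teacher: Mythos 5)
Your proposal is correct and follows essentially the same route as the paper's proof: construct the functor via the universal property of the enriched Yoneda embedding, obtain a right adjoint from the adjoint functor theorem, identify it on representables through the enriched Yoneda lemma and full faithfulness of $\cC\hookrightarrow\cD$, show the right adjoint preserves all colimits using compactness (for filtered colimits) plus stability (for pushouts), and then globalize the unit and counit equivalences over the generating sets. The technical point you flag at the end---identifying the abstract adjoint with the pointwise $\Hom_\cD$-presheaf while tracking the enrichment---is exactly the step the paper handles by viewing the unit as a map of $\cC$-$\cC$-bimodules in $\Fun(X^{\op}\times X,\Sp)$ and checking it objectwise.
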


\begin{rem}
  Theorem \ref{t:modules} appears in \cite[Theorem 7.1.2.1]{Lur2} for the case that $|C|=1$. The general case, formulated in the language of model categories, can be found in \cite[Theorem 3.3.3]{ScSh1}. In \cite{GM} the last result can be found under more general hypotheses
\end{rem}

\begin{proof}
By defintion of full enriched subcategory, there is a fully-faithful $\Sp$-functor $i:\cC\to\cD$. By the universal property of the Yoneda embedding, we have an induced functor of categories left-tensored over $\Sp$
$$I\colon P_{\Sp}(\cC)\to\cD,$$
such that $I\circ Y\simeq i$. We thus get an equivalence $I(Y(c))\simeq i(c)\simeq c$, for every $c\in\cC$, and it remains to show that $I$ is an equivalence.

The functor $I$ is a morphism of left modules over $\Sp$ in $\Catc$, so in particular $I$ commutes with colimits.
The $\infty$-category $\cD$ is presentable by \cite[Theorem 5.5.1.1]{Lur1}. Let $X$ denote the full subspace of $\cD$ generated by $C$ and recall that the $\infty$-category $P_{\Sp}(\cC)$ is defined as the category of left $\cC^{\op}$-modules with
values in $\Fun(X^{\op},\Sp)$.
Since $\Fun(X^{\op},\Sp)$ is stable and presentable, so is $P_{\Sp} (\cC)$ (see \cite[1.1.3.1 and 4.2.3.5]{Lur2}).
Thus, by the adjoint functor theorem $I$ has a right adjoint $J$:
$$I\colon P_{\Sp}(\cC)\adj\cD \noloc J.$$

We first show that the unit $Y(c)\to J(I(Y(c)))$ of the adjunction $I\dashv J$ is an equivalence for every $c\in\cC$. It is not hard to show that $J$ preserves $\Sp$-enrichment, so both $Y$ and $J\circ I\circ Y$ are $\Sp$-functors $\cC\to P_{\Sp}(\cC)$, and that the unit induces a map $Y\to J\circ I\circ Y$ of $\Sp$-functors.
Note that
$$\Fun_{\Sp}(\cC,P_{\Sp}(\cC))= \LMod_{\cC}(\Fun(X,P_{\Sp}(\cC)))=$$
$$\LMod_{\cC}(\Fun(X,\Fun_{\Sp^{rev}}(\cC^{\op},\Sp)))=\LMod_{\cC}(\Fun(X,\LMod_{\cC^{\op}}(\Fun(X^{\op},\Sp))))= $$
$$\LMod_{\cC}(\LMod_{\cC^{\op}}(\Fun(X,\Fun(X^{\op},\Sp))))=
\LMod_{\cC}(\RMod_{\cC}(\Fun(X^{\op}\times X,\Sp))),$$
so an $\Sp$-functor $\cC\to P_{\Sp}(\cC)$ is the same as a $\cC$-$\cC$-bimodule in the category $\Fun(X^{\op}\times X,\Sp)$. Thus we can view $Y\to J\circ I\circ Y$ as a map of $\cC$-$\cC$-bimodules in $\Fun(X^{\op}\times X,\Sp)$ and we need to show that it is an equivalence. The forgetful functor to $\Fun(X^{\op}\times X,\Sp)$ reflects equivalences,
and an equivalence in $\Fun(X^{\op}\times X,\Sp)$ can be verified objectwise, so we can fix two objects $c,d$ in $\cC$ and show that the induced map of spectra
$$Y(c,d)\to (J\circ I\circ Y)(c,d)$$
is an equivalence. But since $Y$ and $i$ are $\Sp$-fully faithful, we have
$$(J\circ I\circ Y)(c,d)=J(I(Y(d)))(c)\simeq$$
$$\Hom_{P_{\Sp} (\cC)} (Y (c),J(I(Y(d)))) \simeq \Hom_{\cD} (I(Y(c)),I(Y(d)))\simeq$$
$$ \Hom_{\cD}(i(c),i(d))\simeq{\cC}(c,d)\simeq\Hom_{P_{\Sp} (\cC)} (Y (c),Y(d)) \simeq Y(d)(c)\simeq Y(c,d).$$

Since $I(Y(c))\simeq c$ and $J(c)\simeq Y(c)$, the counit $I(J(c))\to c$ of $I\dashv J$ is also an equivalences, for every $c\in C$. Note that $C$ generates $\cD$ under colimits, $\{Y(c)|c\in C\}$ generates $P_{\Sp}(\cC)$ under colimits and the functor $I$ commutes with colimits. Thus, if we can show that $J$ also commutes with colimits, it would follow that the unit and counit of $I\dashv J$ are equivalences, and we are done.

Let us show first that $J$ commutes with filtered colimits. So let $d=\colim_{i\in I} d_i$ be a filtered colimit diagram in $\cD$. We need to verify that the induced map $\colim_{i\in I} J(d_i)\to J(d)$ is an equivalence.
%(more formally, that the maps $J(d_i)\to J(d)$ form a colimit diagram in $P_{\Sp}(\cC)$.
Recall that
$$P_{\Sp}(\cC)=\Fun_{\Sp^{rev}}(\cC^{\op},\Sp)= \LMod_{\cC^{\op}}(\Fun(X^{\op},\Sp)).$$
The forgetful functor $U$ from $P_{\Sp}(\cC)$ to $\Fun(X^{\op},\Sp)$
% $$U:\LMod_{\cC^{\op}}(\Fun(X^{\op},\Sp))\to \Fun(X^{\op},\Sp)$$
commutes with colimits (see \cite[4.2.3.5]{Lur2}) and reflects equivalences, so it is enough to verify that
$${\colim}_{i\in I} U(J(d_i))\to U(J(d))$$
is an equivalence.
Now colimits in $\Fun(X^{\op},\Sp)$ are pointwise, so we can fix $c \in \cC$ and show that
$${\colim}_{i\in I} U(J(d_i))(c)\to U(J(d))(c)$$
in an equivalence.
%For every $A\in\Sp$ we have $$\Map_\cD(A\otimes c,e)=\Map_\Sp(A,U(J(e))(c))$$
We have an equivalence natural in $e\in \cD$
$$U(J(e))(c)=J(e)(c)=\Hom_{P_{\Sp} (\cC)} (Y (c),J(e))\simeq$$ $$
\Hom_{\cD}(I(Y(c)),e)\simeq\Hom_{\cD}(i(c),e)\simeq \Hom_\cD(c,e),$$
so it is enough to show that
$${\colim}_{i\in I} \Hom_\cD(c,d_i)\to\Hom_\cD(c,d)$$
is an equivalence, which is true by the compactness of $c$ in $\cD$.

Since both range and domain of $J$ are stable, and in a stable $\infty$-category every pullback square is a pushout square and vice versa, it follows that $J$ sends pushout squares to pushout squares. Thus $J$ commutes with all small colimits.
\end{proof}

Using the results in \cite{Hin3} one can prove an extension to Theorem \ref{t:modules}:

\begin{thm}\label{t:modules monoidal}
In the situation of Theorem \ref{t:modules}, suppose $\cD$ is symmetric monoidal and the set $C$ is closed under the monoidal product in $\cD$ and contains the unit of $\cD$. Then $\cC$ acquires a canonical symmetric monoidal $\Sp$-enriched structure, the category of presheaves $P_\Sp (\cC)$ acquires a canonical symmetric monoidal left $\Sp$-tensored structure and the equivalence $P_{\Sp}(\cC)\xrightarrow{\sim}\cD$ acquires a canonical symmetric monoidal left $\Sp$-tensored structure.
\end{thm}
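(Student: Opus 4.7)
The plan is to proceed in two stages: first upgrade the $\Sp$-enriched structures on $\cC$ and $P_\Sp(\cC)$ to their symmetric monoidal analogues, and then apply the symmetric monoidal version of the universal property of the enriched Yoneda embedding, recorded in the last paragraph of Section~\ref{ss:enriched}, to produce a symmetric monoidal left $\Sp$-tensored functor $P_\Sp(\cC)\to\cD$ whose underlying functor agrees with the equivalence $I$ constructed in Theorem~\ref{t:modules}. Since a symmetric monoidal functor whose underlying functor is an equivalence of $\infty$-categories is itself an equivalence of symmetric monoidal $\infty$-categories, this will complete the proof.

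First I would produce a symmetric monoidal $\Sp$-enriched structure on $\cC$ so that the inclusion $i\colon\cC\hookrightarrow\cD$ is a symmetric monoidal $\Sp$-functor. The key input is that $\cD$ is a symmetric monoidal $\Sp$-left-tensored $\infty$-category (a $\Com$-algebra in $\LMod_\Sp(\Catc)$), so the representable mapping spectra $\Hom_\cD(-,-)$ admit a $\Com$-monoidal refinement of the enrichment of Proposition~\ref{proposition: enrichment}. Since $C$ is closed under the monoidal product of $\cD$ and contains the unit, the full $\Sp$-enriched subcategory $\cC$ spanned by $C$ is stable under the monoidal product at the level of objects, and by the monoidal variant of \cite[Corollary 6.3.4]{Hin2} this suffices to inherit a canonical symmetric monoidal $\Sp$-enriched structure on $\cC$ together with a promotion of $i\colon\cC\to\cD$ to a symmetric monoidal $\Sp$-enriched functor.

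Next I would invoke the symmetric monoidal universal property of the Yoneda embedding from~\cite{Hin3}, as summarized at the end of Section~\ref{ss:enriched}. This equips $P_\Sp(\cC)$ with its Day-convolution symmetric monoidal left $\Sp$-tensored structure, makes $Y\colon\cC\to P_\Sp(\cC)$ symmetric monoidal, and yields the natural equivalence
\[\Map^\Com_{\LMod_\Sp}(P_\Sp(\cC),\cD)\;\simeq\;\Map^\Com_\Sp(\cC,\cD).\]
The symmetric monoidal $\Sp$-functor $i$ produced in the previous step is an element of the right-hand side, so transporting it across this equivalence yields a symmetric monoidal left $\Sp$-tensored functor $\widetilde{I}\colon P_\Sp(\cC)\to\cD$ with $\widetilde{I}\circ Y\simeq i$. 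Forgetting the symmetric monoidal structure, $\widetilde I$ satisfies the defining property of the functor $I$ from Theorem~\ref{t:modules} (it is a morphism of left $\Sp$-modules in $\Catc$ extending $i$), and by the uniqueness clause of the non-monoidal universal property its underlying functor is equivalent to $I$. Since $I$ is an equivalence of underlying $\infty$-categories by Theorem~\ref{t:modules}, $\widetilde I$ is an equivalence of symmetric monoidal left $\Sp$-tensored $\infty$-categories, finishing the proof.

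The main obstacle I anticipate is the first step: establishing rigorously, inside Hinich's operadic framework, that the full $\Sp$-enriched subcategory on $C$ admits a canonical symmetric monoidal refinement making the inclusion a symmetric monoidal $\Sp$-functor. Concretely, one has to promote the construction of Proposition~\ref{proposition: enrichment} from the associative operad $\Ass$ to the commutative operad $\Com$ and to check compatibility with the left $\Sp$-tensoring; this is precisely the content of the relative-to-$\cO$ version of Hinich's results in~\cite{Hin3} applied with $\cO=\Com$. Everything after this upgrade is formal, as it reduces to invoking the symmetric monoidal universal property of $P_\Sp(\cC)$ and the already established non-monoidal equivalence.
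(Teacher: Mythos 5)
Your proposal is correct and follows exactly the route the paper intends: the paper gives no written proof of Theorem \ref{t:modules monoidal}, stating only that it follows ``using the results in \cite{Hin3},'' and your argument is precisely the elaboration of that pointer --- upgrade $\cC$ and the inclusion $i$ to the symmetric monoidal $\Sp$-enriched setting, apply the $\Com$-monoidal universal property of the enriched Yoneda embedding recorded at the end of Section~\ref{ss:enriched} to obtain a symmetric monoidal left $\Sp$-tensored functor extending $i$, and identify its underlying functor with the equivalence $I$ of Theorem~\ref{t:modules} via the uniqueness clause of the non-monoidal universal property. Your identification of the only nontrivial input (the monoidal refinement of Proposition~\ref{proposition: enrichment}, i.e.\ the relative-to-$\Com$ version of Hinich's full enriched subcategory construction) is also the right place to locate the real content.
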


% Let $\cD$ be a cocomplete symmetric monoidal stable $\infty$-category. Suppose that there is a small set $C$ of compact objects in $\cD$, that generates $\cD$ under colimits and is closed under the monoidal product in $\cD$. Thinking of $\cD$ as a symmetric monoidal left-tensored category over $\Sp$, we let $\cC$ be the full symmetric monoidal $\Sp$-enriched subcategory of $\cD$ spanned by $C$. Then we have a natural symmetric monoidal equivalence

\section{Stabilization of categories}\label{ss:stab}

In this section we review the notion of stabilization of an $\infty$-category. We will use the framework established by Lurie in~\cite[Section 1.4]{Lur2}. We present in subsection \ref{ss: spectral topological} a similar procedure that can be applied to an ordinary topologically enriched category. We will compare the strict and the $\infty$-categorical versions of stabilization.
%We will also give an explicit description of the spectral enrichment of a stable model category, along with a strict version thereof, which associates to a $1$-category enriched over $\Top$ a $1$-category enriched over Spectra.

Let $\cC$ be a pointed $\infty$-category. To ensure that $\cC$ has all the good properties we may want, we will assume that $\cC\simeq \Ind(\cC_0)$ where $\cC_0$ is a small pointed $\infty$-category that is closed under finite colimits. This includes $\NCW\simeq \Ind(\FNCW)$. By~\cite[Theorem 5.5.1.1]{Lur1} $\cC$ is presentable, and therefore has small limits and colimits [op. cit., Corollary 5.5.2.4]. Furthermore, filtered colimits commute with finite limits in $\cC$ by the remark immediately following \cite[Definition 5.5.7.1]{Lur1}. It follows, in particular, that $\cC$ is {\it differentiable} in the sense of~\cite[Definition 6.1.1.6]{Lur2} and therefore the results of [op. cit., Chapter 6] apply to $\cC$.

Recall that $\FCW$ is the $\infty$-category of pointed finite CW-complexes. Let $F\colon\FCW\to \cC$ be a functor. Recall that $F$ is called {\it reduced} if $F(*)$ is a final object of $\cC$, and $F$ is called {\it $1$-excisive} if $F$ takes pushout squares to pullback squares. Let {\it linear} functors be functors that are both reduced and $1$-excisive. Linear functors provide a good framework for defining spectra in the context of general $\infty$-categories.
\begin{define}[\cite{Lur2}, Definition 1.4.2.8]\label{d:SpC}
A {\it spectrum object} in $\cC$ is a linear functor $\FCW\to \cC$. Let $\Sp(\cC)$ be the $\infty$-category of linear functors $\FCW\to \cC$.
\end{define}
$\Sp(\cC)$ is called the category of spectra of $\cC$, or the stabilization of $\cC$. By results in~\cite{Lur2}, $\Sp(\cC)$ is a stable and presentable $\infty$-category (Corollary 1.4.2.17 and Proposition 1.4.4.4 respectively).

Let $\Fun_*(\FCW, \cC)$ be the category of all pointed functors from $\FCW$ to $\cC$. Then $\Sp(\cC)$ is by definition a full subcategory of $\Fun_*(\FCW, \cC)$. The fully faithful functor $\Sp(\cC)\hookrightarrow \Fun_*(\FCW, \cC)$ has a left adjoint $$\Lin\colon \Fun_*(\FCW, \cC) \to \Sp(\cC)$$ called linearization. Explicitly, if $F\colon \FCW \to \cC$ is a pointed functor, then the linearization of $F$ is given by the following formula
\[
\Lin F(X)=\colim_{n\to \infty}\Omega^n_{\cC} F(\Sigma^n X).
\]
See~\cite[Example 6.1.1.28]{Lur2} for a discussion of this formula in the context of $\infty$-categories (of course this formula is older than~\cite{Lur2} and goes back at least to~\cite{Goo}).
The stabilization $\Sp(\cC)$ is the left Bousfield localization of $\Fun_*(\FCW, \cC)$ at the stable equivalences and $\Lin$ is the localization functor (a map between functors is a stable equivalence if it induces an equivalence between linearizations).

There is an adjoint pair of functors
\[
\Sigma_{\cC}^{\infty} \colon \cC \leftrightarrows  \Sp(\cC) \noloc \Omega^{\infty}_{\cC},
\]
where $\Sigma^\infty_{\cC} x(K)=\colim_{n\to \infty} \Omega^n_{\cC} \Sigma^n_{\cC} (K \wedge x)$ and $\Omega^{\infty}_{\cC} G=G(S^0)$. This formula for $\Omega^\infty_{\cC}$ agrees with the one in~\cite[Notation 1.4.2.20]{Lur2}, and therefore our $\Sigma^\infty_{\cC}$, being left adjoint to $\Omega^\infty_{\cC}$ is also equivalent to Lurie's. The functor $\Sigma^\infty_{\cC}$ satisfies the following universal property: For every stable presentable $\infty$-category $\cD$, pre-composition with $\Sigma^{\infty}_\cC$ induces an equivalence of $\infty$-categories
$$\Funl(\Sp(\cC),\cD)\xrightarrow{\simeq} \Funl(\cC,\cD),$$
where $\Funl$ denotes left functors (that is, colimit preserving functors).

An important special case is when $\cC=\cS_*$ is the $\infty$-category of pointed spaces. In this case $\Sp:=\Sp(\cS_*)$ is the classical $\infty$-category of spectra, presented as the category of linear functors from $\FCW$ to $\cS_*$.
Whenever $\cC=\cS_*$ we write $\Sp$, $\Sigma^\infty$ or $\Omega^\infty$, omitting the subscript $\cC$.

There is another useful way to construct $\Sp(\cC)$ when $\cC=\Ind(\cC_0)$, with $\cC_0$ closed under finite colimits. We will now describe it.
% (for more detail see \cite{Har})
\begin{define}
Let $\cC_0$ be an $\infty$-category closed under finite colimits. Define the Spanier-Whitehead category of $\cC_0$,
which we denote by $\SW(\cC_0)$, to be the colimit of the sequence
$$\cC_0\xrightarrow{\Sigma_{\cC_0}}\cC_0\xrightarrow{\Sigma_{\cC_0}}\cdots$$
in $\Cat$.
\end{define}
Thus, the objects
of $\SW(\cC_0)$ are pairs $(X,n)$ where $X\in\cC_0$ and $n\in \mathbb{N}$. The pair $(X, n)$ represents the $n$-fold desuspension of $X$. The mapping spaces in $\SW(\cC_0)$
are given by
$$\Map_{\SW(\cC_0)}((X,n),(Y,m)) = {\colim}_{k\in\NN}\Map_{\cC_0}(\Sigma_\cC^{k-n} X,\Sigma_\cC^{k-m} Y),$$
where the colimit is taken in the $\infty$-category of spaces. Clearly, $\SW(\cC_0)$ is a stable $\infty$-category. It is closed under finite colimits, but not under arbitrary colimits. It plays the role of the category of finite spectra over $\cC$.
There is a finite suspension spectrum functor
$${\Sigma^{\infty}_{\cC_0}}^f:\cC_0\to \SW(\cC_0)$$
given by $X\mapsto (X,0)$, which satisfies the following universal property: For every stable $\infty$-category $\cD$, pre-composition with ${\Sigma^{\infty}_{\cC_0}}^f$
induces an equivalence of $\infty$-categories
$$\Funfinc(\SW(\cC_0),\cD)\xrightarrow{\simeq}\Funfinc(\cC_0,\cD),$$
where $\Funfinc$ denotes pointed finite colimit preserving functors.

One has the following description of $\Sp(\Ind(\cC_0))$:
\begin{prop}\label{p:stab ind}
Let $\cC_0$ be a small pointed finitely cocomplete ${\infty}$-category, and let $\cC:=\Ind(\cC_0 )$. Then there is a natural equivalence
$$\Sp(\cC)\simeq\Ind(\SW(\cC_0 )).$$
and under this equivalence,
$$\Sigma^{\infty}_\cC:\Ind(\cC_0 )\simeq\cC\to \Sp(\cC)\simeq \Ind(\SW(\cC_0 )),$$
is just the prolongation of
$${\Sigma^{\infty}_{\cC_0}}^f:{\cC_0}\to \SW({\cC_0}).$$
\end{prop}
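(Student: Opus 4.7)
The strategy is to recognize both sides as corepresenting the same functor on the $\infty$-category $\Prlex$ of stable presentable $\infty$-categories. First, I would check that $\Ind(\SW(\cC_0))$ belongs to $\Prlex$: it is presentable as the $\Ind$-completion of a small $\infty$-category, and it is stable because $\SW(\cC_0)$ is stable and $\Ind$ of a stable $\infty$-category is stable (this can be seen either by noting that $\Omega$ on $\Ind(\SW(\cC_0))$ is computed objectwise on representables, or by appealing to \cite[Proposition 1.1.3.6]{Lur2}).

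Next, I would combine two universal properties. For any $\cD\in\Prlex$, the standard universal property of $\Ind$ gives
$$\Funl(\Ind(\SW(\cC_0)),\cD)\simeq\Funfinc(\SW(\cC_0),\cD),$$
and the universal property of the Spanier--Whitehead construction recalled in this section gives
$$\Funfinc(\SW(\cC_0),\cD)\simeq\Funfinc(\cC_0,\cD)$$
when $\cD$ is stable. On the other hand, the universal property of $\Sigma^{\infty}_\cC$ gives $\Funl(\Sp(\cC),\cD)\simeq\Funl(\cC,\cD)$, and since $\cC=\Ind(\cC_0)$ the universal property of $\Ind$ again identifies this with $\Funfinc(\cC_0,\cD)$. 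Thus both $\Sp(\cC)$ and $\Ind(\SW(\cC_0))$ corepresent the functor $\cD\mapsto\Funfinc(\cC_0,\cD)$ on $\Prlex$, and the Yoneda lemma in $\Prlex$ produces a natural equivalence $\Sp(\cC)\simeq\Ind(\SW(\cC_0))$.

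To identify this equivalence with the prolongation of ${\Sigma^{\infty}_{\cC_0}}^f$, I would chase the identity morphism through both chains. Through the first chain, $\mathrm{id}_{\Ind(\SW(\cC_0))}$ corresponds to the composition of ${\Sigma^{\infty}_{\cC_0}}^f\colon\cC_0\to\SW(\cC_0)$ with the Yoneda inclusion $\SW(\cC_0)\hookrightarrow\Ind(\SW(\cC_0))$; through the second chain, $\mathrm{id}_{\Sp(\cC)}$ corresponds to $\Sigma^{\infty}_\cC\circ j\colon\cC_0\to\cC\to\Sp(\cC)$. The Yoneda-induced equivalence $\Sp(\cC)\xrightarrow{\simeq}\Ind(\SW(\cC_0))$ therefore sends $\Sigma^{\infty}_\cC\circ j$ to the canonical inclusion of $\cC_0$ into $\Ind(\SW(\cC_0))$. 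Since colimit-preserving functors out of $\Ind(\cC_0)$ are determined by their restriction to $\cC_0$, this identifies the equivalence on all of $\Ind(\cC_0)$ with the prolongation of ${\Sigma^{\infty}_{\cC_0}}^f$, as claimed.

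The main obstacle is bookkeeping: one must verify that the two chains of universal-property equivalences are actually compatible on the nose (not just up to some unspecified rotation), so that the induced equivalence really lands on $\Sigma^{\infty}_\cC$ rather than, say, a shift of it. This reduces to tracking that the restriction of $\Sigma^{\infty}_\cC\colon\Ind(\cC_0)\to\Sp(\Ind(\cC_0))$ along $j\colon\cC_0\hookrightarrow\Ind(\cC_0)$ is itself a pointed finite colimit preserving functor into a stable $\infty$-category, which is automatic since $\Sigma^{\infty}_\cC$ is a left adjoint and $j$ preserves finite colimits.
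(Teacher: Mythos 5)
Your argument is correct and is essentially the paper's own proof in expanded form: the paper simply remarks that one checks $\Ind(\SW(\cC_0))$ satisfies the same universal property as $\Sp(\cC)$ (following Lurie's treatment of the case $\cC_0=\FCW$), which is precisely the chain of universal properties you assemble. The identification of the equivalence with the prolongation of ${\Sigma^{\infty}_{\cC_0}}^f$ via tracking the identity functor is the standard and correct way to make the second assertion precise.
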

\begin{proof}
This is proved in~\cite[Chapter 1.4]{Lur2} in the case when $\cC_0=\FCW$ and $\cC=\Ind(\FCW)\simeq \cS_*$. The proof in the general case is similar. In brief, one can check that $\Ind(\SW(\cC_0 ))$ satisfies the same universal property as $\Sp(\cC)$.
\end{proof}
This proposition has the following rather important corollary.
\begin{cor}\label{cor: generates}
Let $\cC_0, \cC$ be as before. Suppose $A$ is a set of objects of $\cC_0$ that generates $\cC_0$ under finite colimits, in the sense that $\cC_0$ is the only subcategory of $\cC_0$ that contains $A$ and is closed under finite colimits and equivalences. Then the set of suspension spectra $\{\Sigma^\infty_{\cC} x\mid x\in A\}$ generates $\SW(\cC_0)$ under finite colimits and desuspensions, and generates $\Sp(\cC)$ under arbitrary colimits and desuspensions.
\end{cor}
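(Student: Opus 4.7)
The plan is to establish the statement first for $\SW(\cC_0)$ and then bootstrap to $\Sp(\cC)$ via Proposition \ref{p:stab ind}.

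\textbf{Step 1 (generation of $\SW(\cC_0)$).} Let $\cT\subseteq \SW(\cC_0)$ be the smallest full subcategory containing $\{{\Sigma^\infty_{\cC_0}}^f x \mid x\in A\}$ and closed under finite colimits, desuspensions, and equivalences. I want to show $\cT=\SW(\cC_0)$. Consider the full subcategory $\cC_0'\subseteq \cC_0$ of objects $X$ with ${\Sigma^\infty_{\cC_0}}^f X\in\cT$. It contains $A$ and is closed under equivalences. The universal property of ${\Sigma^\infty_{\cC_0}}^f$ stated right before Proposition \ref{p:stab ind} tells us that ${\Sigma^\infty_{\cC_0}}^f$ preserves finite colimits, so $\cC_0'$ is closed under finite colimits. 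By the generation hypothesis on $A$, we get $\cC_0'=\cC_0$, hence $(X,0)={\Sigma^\infty_{\cC_0}}^f X\in\cT$ for every $X\in\cC_0$. Finally, every object of $\SW(\cC_0)$ is of the form $(X,n)$, which is the $n$-fold desuspension of $(X,0)$; since $\cT$ is closed under desuspensions, we conclude $\cT=\SW(\cC_0)$.

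\textbf{Step 2 (generation of $\Sp(\cC)$).} By Proposition \ref{p:stab ind} we have $\Sp(\cC)\simeq \Ind(\SW(\cC_0))$, and under this equivalence $\Sigma^\infty_\cC$ restricted to $\cC_0\subseteq\cC$ agrees with ${\Sigma^\infty_{\cC_0}}^f$ (composed with the canonical inclusion $\SW(\cC_0)\hookrightarrow\Ind(\SW(\cC_0))$). Every object of $\Ind(\SW(\cC_0))$ is a filtered colimit of objects of $\SW(\cC_0)$, and by Step 1 every object of $\SW(\cC_0)$ lies in the closure of $\{\Sigma^\infty_\cC x\mid x\in A\}$ under finite colimits and desuspensions. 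Combining filtered colimits with finite colimits yields arbitrary small colimits, so $\{\Sigma^\infty_\cC x\mid x\in A\}$ generates $\Sp(\cC)$ under arbitrary colimits and desuspensions.

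\textbf{Main obstacle.} The argument is essentially formal once one has the explicit description of $\SW(\cC_0)$ and the universal property that makes ${\Sigma^\infty_{\cC_0}}^f$ preserve finite colimits. The only mildly delicate point is to verify at the $\infty$-categorical level that the ``preimage subcategory'' $\cC_0'$ in Step 1 is genuinely closed under finite colimits; this is immediate from the fact that $\cT$ is closed under finite colimits together with finite-colimit-preservation of ${\Sigma^\infty_{\cC_0}}^f$, but one should be careful that ``smallest full subcategory closed under these operations'' is well-defined in the $\infty$-categorical sense (one can take the intersection of all such full subcategories, which is closed under the required operations and is the inverse image of a full subcategory of the homotopy category stable under the corresponding operations).
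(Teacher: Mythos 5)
Your argument is correct and follows exactly the route the paper intends: the corollary is stated there without proof as an immediate consequence of Proposition \ref{p:stab ind}, and your two steps (generation of $\SW(\cC_0)$ via the finite-colimit-preserving functor ${\Sigma^\infty_{\cC_0}}^f$ and the preimage-subcategory argument, then passage to $\Ind(\SW(\cC_0))\simeq\Sp(\cC)$ using filtered colimits) are precisely the routine details being suppressed. No gaps.
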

%The universal properties of $\SW(\cC_0)$ and $\Sp(\cC)$ imply that if $\cC_0$ is a small finitely cocomplete $\infty$-category, $\cD$ is a presentable 'stable $\infty$-category, and $i\colon \cC_0\to \cD$ is a functor that preserves finite colimits, then $i$ extends, essentially uniquely, to a functor $i\colon\SW(\cC_0)\to \cD$ that preserves finite colimits, and a functor $i\colon \Sp(\Ind(\cC_0))\to \cD$ that preserves all small colimits. We will denote the extensions of $i$ by the same letter $i$.

The following notation is going to be used quite a lot.
\begin{define}\label{d:G infinity}
Suppose $\cC$ is a pointed $\infty$-category with finite colimits. Let $x$ and $y$ be object of $\cC$. Then the functor $G^{\cC}_{x,y}\colon \FCW\to \cS_*$ is defined by $G^{\cC}_{x,y}(K)=\Map_{\cC}(x, K\wedge y)$. Sometimes we will omit the superscript $\cC$ and write simply $G_{x, y}$.
\end{define}

\begin{lem}
If $\cC$ is a stable $\infty$-category then $G_{x, y}$ is linear for any two objects $x, y$.
\end{lem}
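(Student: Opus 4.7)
The plan is to verify the two defining conditions of linearity, namely \emph{reduced} and \emph{$1$-excisive}, by combining the fact that the tensoring $K\mapsto K\wedge y$ preserves pushouts with the defining property of stability that pushout squares coincide with pullback squares.

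First I would check that $G_{x,y}$ is reduced. Since $\cC$ is pointed and the tensoring of a pointed space $K$ with an object $y\in\cC$ is functorial in $K$ with $*\wedge y\simeq *$, we have
\[
G_{x,y}(*)=\Map_{\cC}(x,*\wedge y)\simeq \Map_{\cC}(x,*)\simeq *,
\]
where the last equivalence uses that $*$ is a zero object in the pointed $\infty$-category $\cC$.

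Next I would check that $G_{x,y}$ is $1$-excisive. The functor $K\mapsto K\wedge y$ from $\FCW$ to $\cC$ is a left adjoint (to $\Hom_{\cC}(y,-)$, via the tensoring of $\cC$ over pointed spaces), so it preserves pushout squares. Hence for any pushout square
\[
\xymatrix{K\ar[r]\ar[d] & L\ar[d]\\ L'\ar[r] & L\cup_{K}L'}
\]
in $\FCW$, smashing with $y$ yields a pushout square in $\cC$. Since $\cC$ is stable, every pushout square in $\cC$ is also a pullback square. Finally, $\Map_{\cC}(x,-)\colon \cC\to \cS_*$ preserves pullback squares (mapping spaces always commute with limits in the target variable), so applying it yields a pullback square in $\cS_*$. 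Therefore $G_{x,y}$ sends pushout squares in $\FCW$ to pullback squares in $\cS_*$, i.e.\ is $1$-excisive.

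There is no real obstacle here; the proof is essentially a one-line composition of three facts (left adjoint preserves pushouts, stable categories identify pushouts with pullbacks, mapping spaces preserve pullbacks). The only thing worth being careful about is that the tensoring $-\wedge y$ indeed exists and is a left adjoint, which follows from the fact that $\cC$, being a pointed $\infty$-category with finite colimits, is canonically tensored over $\FCW$.
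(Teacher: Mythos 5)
Your proof is correct, but it takes a different route from the paper's. You verify $1$-excision directly: a pushout square in $\FCW$ is carried by $-\wedge y$ to a pushout square in $\cC$, which by stability is a pullback square, which $\Map_{\cC}(x,-)$ preserves. The paper instead only uses stability through the equivalence $\Sigma_\cC\colon\cC\to\cC$: it exhibits a natural equivalence $G_{x,y}\simeq \Omega\, G_{x,y}\Sigma$ via $\Map_{\cC}(x,K\wedge y)\simeq\Map_{\cC}(S^1\wedge x, S^1\wedge K\wedge y)\simeq\Map_{\cC}(x,\Omega(S^1\wedge K\wedge y))$, and then concludes that the linearization map $G_{x,y}\to\Lin G_{x,y}$ (given by the colimit of $\Omega^n G_{x,y}\Sigma^n$) is an equivalence, so $G_{x,y}$ is linear. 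Your argument is the more elementary and self-contained one, checking the definition of $1$-excision on an arbitrary pushout square; the paper's argument is shorter given the linearization machinery already set up in that section (the formula $\Lin F=\colim_n\Omega^n F\Sigma^n$), and fits the surrounding narrative in which $\Lin$ is the main tool. One small caveat in your write-up: $-\wedge y$ need not literally be a left adjoint out of all of $\cS_*$ unless $\cC$ has enough colimits (the lemma only assumes $\cC$ stable, hence finitely cocomplete); what you actually need, and what is true, is that the tensoring $\FCW\times\cC\to\cC$ of a pointed finitely cocomplete $\infty$-category over finite pointed spaces preserves finite colimits in each variable, which gives the preservation of pushout squares you use.
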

\begin{proof}
We need to prove that $G_{x, y}(*)\simeq *$ and that $G_{x, y}$ is $1$-excisive. The first condition holds because $*\wedge y$ is equivalent to a final object of $\cC$. Now let us prove that $G_{x, y}$ is $1$-excisive.
We have equivalences
\[
\Map_{\cC}(x, K\wedge y) \xrightarrow{\simeq} \Map_{\cC}(S^1\wedge x, S^1 \wedge K \wedge y) \xrightarrow{\simeq} \Map_{\cC}(x, \Omega (S^1\wedge K \wedge y)).
\]
Here the first map is an equivalence because $\cC$ is stable, and the second equivalence is a standard adjunction. The composite equivalence can be reinterpreted as saying that the canonical map $G_{x, y}\to \Omega G_{x, y}\Sigma$ is an equivalence. It follows that the map $G_{x, y}\to \Lin G_{x, y}$ is an equivalence, so $G_{x, y}$ is linear.
\end{proof}
If $\cC$ is a stable $\infty$-category then $G^{\cC}_{x,y}$ is in fact equivalent to the canonical enrichment of $\cC$ over spectra. The next lemma and remark say that more generally the linearization of $G^{\cC}$ gives, in favorable circumstances, the spectral enrichment of the stabilization of $\cC$.
\begin{lem}\label{lemma: faithful}
Let $\cC_0$ be a small finitely cocomplete $\infty$-category. To simplify notation, let $S\colon \cC_0\to \SW(\cC_0)$ be the finite suspension spectrum functor. Then the natural map
\[
G^{\cC_0}_{x, y}\to G^{\SW(\cC_0)}_{S(x), S(y)}
\]
induced by the finite suspension spectrum functor is equivalent to the linearization map
\[
G^{\cC_0}_{x, y}\to  \Lin G^{\cC_0}_{x, y}.
\]
\end{lem}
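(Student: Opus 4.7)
The plan is to identify both sides with the same explicit sequential colimit. Since $\SW(\cC_0)$ is stable, the preceding lemma guarantees that $G^{\SW(\cC_0)}_{S(x),S(y)}$ is already linear. The natural transformation $G^{\cC_0}_{x,y}\to G^{\SW(\cC_0)}_{S(x),S(y)}$ induced by $S$ therefore targets a linear functor, so by the universal property of linearization (adjointness of the inclusion $\Sp(\cS_*)\hookrightarrow \Fun_*(\FCW,\cS_*)$) it factors uniquely through the unit $G^{\cC_0}_{x,y}\to \Lin G^{\cC_0}_{x,y}$. It thus suffices to verify that the induced comparison $\Lin G^{\cC_0}_{x,y}\to G^{\SW(\cC_0)}_{S(x),S(y)}$ is an equivalence.

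For this, I would compute both sides. On the linearization side, the formula recalled after Definition \ref{d:SpC} gives
\[
\Lin G^{\cC_0}_{x,y}(K) \;\simeq\; {\colim}_{n}\, \Omega^{n}\Map_{\cC_0}\bigl(x,\, \Sigma^{n}K\wedge y\bigr).
\]
Using that $\cC_0$ is pointed and finitely cocomplete, hence tensored over finite pointed spaces, together with the suspension–loop adjunction applied $n$ times, this rewrites naturally as
\[
{\colim}_{n}\, \Map_{\cC_0}\bigl(\Sigma^{n}x,\, \Sigma^{n}K\wedge y\bigr).
\]
On the Spanier–Whitehead side, the functor $S\colon \cC_0\to \SW(\cC_0)$ preserves finite colimits (it is the universal such functor to a stable category), and so is compatible with the tensoring over finite pointed spaces, giving $K\wedge S(y)\simeq S(K\wedge y)=(K\wedge y,0)$. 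The explicit formula for mapping spaces in $\SW(\cC_0)$ then yields
\[
G^{\SW(\cC_0)}_{S(x),S(y)}(K) \;=\; \Map_{\SW(\cC_0)}\bigl((x,0),(K\wedge y,0)\bigr) \;\simeq\; {\colim}_{k}\, \Map_{\cC_0}\bigl(\Sigma^{k}x,\, \Sigma^{k}K\wedge y\bigr).
\]

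Both sides therefore appear as the same colimit of the same diagram, so the only real content is to check that the comparison map produced above agrees with this identification term by term. The expected obstacle is precisely this bookkeeping: one must verify that the map from the $n$th term $\Omega^{n}\Map_{\cC_0}(x,\Sigma^{n}K\wedge y)$ of $\Lin G^{\cC_0}_{x,y}(K)$ into $G^{\SW(\cC_0)}_{S(x),S(y)}(K)$ induced by $S$ — after applying the suspension–loop adjunction — is precisely the structure map into the $n$th stage of the colimit defining $\Map_{\SW(\cC_0)}((x,0),(K\wedge y,0))$. This amounts to unwinding the construction of the Spanier–Whitehead colimit as the colimit of the $\Sigma_{\cC_0}$-sequence and checking it is compatible with how linearization iterates $\Omega\circ\Sigma$. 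Once this naturality is confirmed, the comparison map is an equivalence on each $K$, completing the proof.
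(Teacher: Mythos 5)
Your argument is correct and follows essentially the same route as the paper: both identify $G^{\SW(\cC_0)}_{S(x),S(y)}(K)$ with $\colim_n \Omega^n\Map_{\cC_0}(x,\Sigma^n K\wedge y)$ via the explicit mapping-space formula in $\SW(\cC_0)$ and the suspension--loop adjunction, and recognize this as the linearization formula. Your initial reduction via the universal property of linearization (using that the target is linear) is a slightly more careful way of organizing the final identification of maps, which the paper simply asserts.
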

\begin{proof}
By definition, we have equivalences
\begin{multline*}
G^{\SW(\cC_0)}_{S(x), S(y)}(K)=\Map_{\SW(\cC_0)}(S(x), K\wedge S(y))= \colim_{n\to \infty} \Map_{\cC_0}(S^n \wedge x, S^n \wedge K\wedge y)=\\
 = \colim_{n\to \infty} \Omega^n \Map_{\cC_0}(x, S^n \wedge K\wedge y)= \colim_{n\to \infty} \Omega^n G^{\cC_0}_{x, y} (S^n \wedge K)= \Lin G^{\cC_0}_{x, y}(K)
\end{multline*}
and the map from $G^{\cC_0}_{x, y}$ is precisely the linearization map.
%
%
%Let us prove that the functor $i\colon \SW(\cC_0)\to \cD$  is fully faithful. We want to show that for any two objects $(x, m), (y, n)$ of $\SW(\cC_0)$,
%the induced map of mapping spaces is an equivalence:
%\[
%\Map_{\SW(\cC_0)}((x, m), (y,n)) \xrightarrow{\simeq} \Map_{\cD}(i(x, m), i(y,n))
%\]
%This functor $i$ is equivalent to the natural functor between the colimits of the rows in the following diagram
%\[
%\begin{diagram}
%\cC_0 & \rTo^{\Sigma_{\cC_0}} & \cC_0 & \rTo^{\Sigma_{\cC_0}} &\cdots \\
%\dTo^i & & \dTo^i & & \cdots \\
%\cD & \rTo_{\simeq}^{\Sigma_{\cD}} & \cD & \rTo_{\simeq}^{\Sigma_{\cD} }&\cdots
%\end{diagram}
%\]
%(this means, in particular, that since $\cD$ is stable, $\cD\simeq \SW(\cD)$). It follows that for any two objects $(x, m), (y,n)\in\SW(\cC_0)$ the induced map of mapping spaces is equivalent to the following
%\[
%\colim_{l\to \infty} \Map_{\cC_0}(S^{l+n}\wedge x, S^{l+m}\wedge y)\to \colim_{l\to \infty} \Map_{\cD}(S^{l+n}\wedge i(x), S^{l+m}\wedge i(y))
%\]
%This is the same as the map (recall that $\Lin G$ is the linearization of $G$)
%\[
%\Lin G^{\cC_0}_{S^n x, S^m y}(S^0) \to \Lin G^{\cD}_{i(S^n x), i(S^m y)}(S^0)
%\]
%By assumption the map $G^{\cC_0}_{S^n x, S^m y}\to G^{\cD}_{i(S^n x), i(S^m y)}$ is a stable equivalence, so the induced map of linearizaitons is an equivalence.
%
%We have shown that the functor $i\colon \SW(\cC_0)\to \cD$ is fully faithful. It follows that the induced functor $i\colon \Ind\SW(\cC_0)\to \cD$ is also fully faithful. But the last functor is equivalent to $i\colon \Sp(\cC)\to \cD$.
\end{proof}
\begin{rem}
If $\cC=\Ind \cC_0$, with $\cC_0$ as in the previous lemma, and $x, y$ are objects of $\cC$, it is not true that $G^{\Sp(\cC)}_{\Sigma^\infty_{\cC}(x) ,\Sigma^\infty_{\cC}(y)}$ is equivalent to the linearization of $G^{\cC}_{x, y}$. Rather, there is an equivalence
\[
G^{\Sp(\cC)}_{\Sigma^\infty_{\cC}(x) ,\Sigma^\infty_{\cC}(y)}(K)= \Map_{\cC}(x, \colim_{n\to \infty} \Omega^n (S^n \wedge y)).
\]
This is equivalent to $\Lin G^{\cC}_{x, y}(K)$ if $x$ is a compact object, but not in general. In particular, it is true when $x\in \cC_0$, which is the case considered in the previous lemma.
\end{rem}

\subsubsection{Spectral enrichment of pointed topological categories}\label{ss: spectral topological}
Suppose $\cC$ is an $\infty$-category and $x, y$ are objects of $\cC$. We saw that functors of the form $G^{\cC}_{x, y}$ can be used to define the spectral enrichment of the stabilization of $\cC$. If $\cC$ is an ordinary topological category, one can use similar functors $G^{\cC}_{x, y}$ to define a spectral enrichment of $\cC$, using the more traditional view of spectra as modeled by the Quillen model category of continuous functors. In this subsection we define a strict spectral enrichment of pointed topological categories and compare it with the $\infty$-categorical enrichment.

Let $\Top$ denote the category of pointed compactly generated weak Hausdorff spaces with the standard model structure of Quillen \cite{Qui}. Every object in $\Top$ is fibrant, and every CW-complex is cofibrant. The model category $\Top$ is a model for the $\infty$-category of pointed spaces $\cS_*$. This means that the $\infty$-localization of $\Top$ with respect to the weak equivalences (see Remark \ref{r:coherent}) is canonically equivalent to $\cS_*$.
Note that any pointed topological category is naturally enriched in $\Top$.

% In the following we view $\FCW$ as a topological category.

% Let $\Top_c$ denote the full topological subcategory of $\Top$ spanned by the cofibrant objects. as is the topological nerve of $\Top_c$

\begin{define}\label{d:CW_f tensored}
A pointed topological category $\cC$ is called tensored closed over $\FCW$ if we are given a bi-continuous left action $\wedge:\FCW \times \cC\to \cC$ such that the following hold:
\begin{enumerate}
  \item The $\infty$-category $\cC_\infty$ is finitely cocomplete, where $\cC_\infty$ is the topological nerve of $\cC$.
  \item After application of the topological nerve the functor
  $$\wedge_\infty:\FCW \times \cC_\infty\to \cC_\infty$$
  commutes with finite colimits in each variable.
\end{enumerate}
\end{define}

\begin{define}\label{def: G}
Let $\cC$ be a pointed topological category, tensored closed over $\FCW$.
Let $x$ and $y$ be objects of $\cC$. In keeping with notation we introduced in Definition~\ref{d:G infinity}, we define the pointed topological functor $G^{\cC}_{x,y}\colon \FCW\to \Top$ by $G^{\cC}_{x,y}(K)=\Map_{\cC}(x, K\wedge y)$. Again, we may omit the superscript $\cC$ and write simply $G_{x, y}$.
\end{define}

\begin{rem}\label{r:G}
We saw earlier that pointed $\infty$-functors from $\FCW$ to $\cS_*$ provide a way of defining the $\infty$-category of spectra. This is known also in the more traditional approach to spectra via model categories. There is a Quillen model structure on the category $\Fun_*(\FCW, \Top)$ of continuous pointed functors, called the stable model structure, and it provides one of the models for the category of spectra.
We refer the reader to \cite{Lyd, MMSS} for more details about this model structure.
\end{rem}

We denote the category $\Fun_*(\FCW, \Top)$ with the stable model structure by $\SpM$.
We denote the category $\Fun_*(\FCW, \Top)$ with the \textbf{projective} model structure simply by $\Fun_*(\FCW, \Top)$.
%Note that we have a natural equivalence
%$$\SpM_\infty\simeq \Sp(\Top_\infty)\simeq \Sp.$$
The model category $\SpM$ is a left Bousfield localization of $\Fun_*(\FCW, \Top)$, so we have a Quillen pair
$$\id:\Fun_*(\FCW, \Top)\rightleftarrows\SpM:\id.$$
Applying $\infty$-localization, this Quillen pair becomes the localization adjunction
$$\cL:\Fun_*(\FCW, \cS_*)\rightleftarrows\Sp:\i.$$

Let $\cC$ be a pointed topological category, tensored closed over $\FCW$ and let $x,y\in\cC$. By the previous definition we have a functor $G^{\cC}_{x,y}\colon \FCW\to \Top$. Under the identification $\Fun_*(\FCW, \Top)_\infty\simeq \Fun_*(\FCW, \cS_*)$, $G^{\cC}_{x,y}$ can be thought of as an object in $\Fun_*(\FCW, \cS_*)$. We will now compare this with the functor
$G^{\cC_\infty}_{x,y}\colon \FCW\to \cS_*$ from Definition \ref{d:G infinity}.

Clearly, the bi-functor
$$\wedge_\infty:\FCW \times \cC_\infty\to \cC_\infty$$
is a left action of the monoidal $\infty$-category $\FCW$ on $\cC_\infty$. It follows that we have an induced action on the ind-categories
$$\wedge_\infty:\cS_* \times \Ind(\cC_\infty)\to \Ind(\cC_\infty)$$
and this action commutes with small colimits in each variable.
Since $\cS_*$ is a mode in the sense of \cite[Section 5]{CSY}, this action coincides with the canonical action of $\cS_*$ on $\Ind(\cC_\infty)$ as a presentable pointed $\infty$-category.
In particular we get that the restriction
$$\wedge_\infty:\FCW \times \cC_\infty\to \cC_\infty$$
coincides with the canonical action of $\FCW$ on $\cC_\infty$ as an $\infty$-category with finite colimits.

It follows that under the identification $\Top_\infty\simeq \cS_*$, for any $K\in\FCW$ and $z\in\cC$ we have natural equivalences
  $$\Map_{\cC}(x, z)\simeq\Map_{\cC_\infty}(x, z)$$
  $$K\wedge y\simeq K\wedge_{\infty} y.$$
  Thus we have
\begin{equation}\label{eq:G}
G^{\cC_\infty}_{x,y}(K)=\Map_{\cC_\infty}(x, K\wedge_{\infty} y)\simeq \Map_{\cC}(x, K\wedge y)=G^{\cC}_{x,y}(K)\end{equation}
   or $G^{\cC_\infty}_{x,y}\simeq G^{\cC}_{x,y}.$

%Recall that $\FNCW$ is the opposite of the smallest full subcategory of $\Csep$ that contains the nonzero finite dimensional simple algebras in $\Csep$ (which are just the matrix algebras over $\CC$) and is closed under finite homotopy-limits. Sometimes we consider $\FNCW$ as an ordinary topological category and sometimes as an $\infty$-category via the topological nerve functor. We will use the following properties of $\FNCW$:
%\begin{enumerate}
%\item The category $\FNCW$ is a pointed topological category, tensored closed over $\FCW$.
%\item Finite $\infty$-colimits exist in $\FNCW$ and can be calculated using the standard Bousfield-Kan model.
%\item For any objects $x, y$ of $\FNCW$, the strict mapping space $\Map_{\FNCW}(x, y):=\Map_{\Csep}(y, x)$ defined using the norm topology in $\Csep$ is weakly equivalent to the derived mapping space from $x$ to $y$ (by definition).
%\end{enumerate}
%The functor $G_{x, y}^{\FNCW}$ is defined by the following formula:
%\[
%G_{x, y}^{\FNCW}(K)=\Map_{\FNCW}(x, K\wedge y)=\Map_{\Csep}(\tC_0(K, y), x).
%\]

\begin{define}\label{d:SpM enrichment}
  Let $\cC$ be a pointed topological category, tensored closed over $\FCW$. We define a strict enrichment of $\cC$ over $\SpM$ as follows: If $x$ and $y$ are objects of $\cC$ we define $$\Hom_\cC(x,y):=G^\cC_{x, y}\in\SpM.$$
  Let $x,y,z$ be objects of $\cC$ and let $K$ and $L$ be finite CW-complexes. Note that there is a natural map $G_{x,y}(K)\wedge G_{y,z}(L)\to G_{x, z}(K\wedge L)$, defined as the composite:
\begin{multline*}
\Map_{\cC}(x, K\wedge y)\wedge \Map_{\cC}(y, L\wedge z)\to \\ \to \Map_{\cC}(x, K\wedge y)\wedge \Map_{\cC}(K\wedge y, K\wedge L\wedge z)\to \\ \to\Map_{\cC}(x, K\wedge L\wedge z)
\end{multline*}
where the first map is induced by the topological functor $K\wedge (-):\cC\to\cC$ and the second map is given by composition.
This map induces a natural map $G_{x, y}\otimes G_{y,z}\to G_{x, z}$, where $\otimes$ denotes Day convolution which is the tensor product in $\SpM$. Thus we have defined composition and it can be checked that the above indeed defines a strict enrichment of $\cC$ over $\SpM$.
\end{define}

%The model category $\SpM$ is a topological monoidal model category. Let $\cC$ be a small category enriched in $\SpM$. Note that for every object $A$ of $\SpM$ we have
% $$\Map_{\SpM}(\Sigma^\infty S^0,A)\simeq \Map_{\Top}(S^0,\Omega^\infty A)\simeq\Omega^\infty A.$$

\begin{thm}\label{t:mapping}
Let $\cC$ be a small pointed topological category, tensored closed over $\FCW$.
Then under the identification
$\SpM_\infty\simeq \Sp$,
for any two objects $x$ and $y$ of $\cC$ we have a natural equivalence
 $$\Hom_\cC(x,y)\simeq \Hom_{\Sp(\Ind(\cC_\infty))}(\Sigma^\infty(x),\Sigma^\infty(y)).$$
\end{thm}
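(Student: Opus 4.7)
The plan is to show that both sides of the asserted equivalence are naturally identified, as objects of $\Sp$, with the linearization $\Lin G^{\cC_\infty}_{x,y}$ of the functor of Definition~\ref{d:G infinity}. The two sides are then linked through this common object.

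For the left-hand side, recall from Definition~\ref{d:SpM enrichment} that $\Hom_\cC(x,y) = G^\cC_{x,y}$ as an object of $\SpM$. The Quillen adjunction
$$\id : \Fun_*(\FCW, \Top) \rightleftarrows \SpM : \id$$
$\infty$-localizes to the adjunction $\cL : \Fun_*(\FCW, \cS_*) \rightleftarrows \Sp : \i$ displayed in the excerpt. Since the stable model structure on $\Fun_*(\FCW, \Top)$ is the left Bousfield localization of the projective model structure at the stable equivalences, and these are exactly the maps inducing equivalences after linearization, the functor $\cL$ agrees with the $\infty$-categorical linearization $\Lin$. Hence under $\SpM_\infty \simeq \Sp$ the object $\Hom_\cC(x,y)$ is naturally identified with $\Lin G^\cC_{x,y}$, and by the equivalence~(\ref{eq:G}) this is in turn naturally equivalent to $\Lin G^{\cC_\infty}_{x,y}$.

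For the right-hand side, set $\cC_0 := \cC_\infty$ and $\tilde\cC := \Ind(\cC_0)$. The Yoneda embedding $j \colon \cC_0 \hookrightarrow \tilde\cC$ is fully faithful, and the action of $\FCW$ on $\tilde\cC$ is the canonical prolongation of its action on $\cC_0$ (as discussed just before equation~(\ref{eq:G})), so for $x,y \in \cC_0$ we have a natural equivalence $G^{\tilde\cC}_{j(x),j(y)} \simeq G^{\cC_0}_{x,y} = G^{\cC_\infty}_{x,y}$. Since $x$ is compact in $\tilde\cC = \Ind(\cC_0)$, the Remark following Lemma~\ref{lemma: faithful} applies and gives
$$\Hom_{\Sp(\tilde\cC)}(\Sigma^\infty x, \Sigma^\infty y) \simeq \Lin G^{\tilde\cC}_{j(x),j(y)} \simeq \Lin G^{\cC_\infty}_{x,y}.$$

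Assembling these two identifications yields the desired natural equivalence. Naturality in $x$ and $y$ is automatic, since every step above is functorial in the two arguments and uses only natural equivalences. The main subtlety is the first step, namely identifying the localization functor $\SpM \to \SpM_\infty \simeq \Sp$ with the $\infty$-categorical linearization $\Lin$; once this compatibility between the model-categorical and $\infty$-categorical approaches to stabilization is established, the remainder of the argument is a chain of routine natural identifications combining (\ref{eq:G}), Proposition~\ref{p:stab ind}, and the Remark after Lemma~\ref{lemma: faithful}.
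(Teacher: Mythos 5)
Your proposal is correct and follows essentially the same route as the paper: both arguments identify the two sides with $\Lin G^{\cC_\infty}_{x,y}$, using the compatibility of the Bousfield localization $\Fun_*(\FCW,\Top)\to\SpM$ with the $\infty$-categorical linearization for the left-hand side, and Lemma~\ref{lemma: faithful} (the paper routes this explicitly through $\SW(\cC_\infty)$ and Proposition~\ref{p:stab ind}, while you invoke the equivalent statement in the remark following that lemma for compact $x$) for the right-hand side.
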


\begin{proof}
Let $x$ and $y$ be objects in $\cC$. Recall that $\Hom_\cC(x,y)=G^\cC_{x,y}$ and consider $G^{\cC}_{x,y}$ as an object in $\Fun_*(\FCW, \Top)$. By \eqref{eq:G}, we have $G^{\cC}_{x,y}\simeq G^{\cC_{\infty}}_{x,y}.$

We have a commutative square
$$\xymatrix{\Fun_*(\FCW, \Top)_\infty\ar[d]_{\mathbb{L}\id}\ar[r]^{\sim} & \Fun_*(\FCW, \cS_*)\ar[d]^{\cL}\\
\SpM\ar[r]^{\sim} & \Sp}$$
Let $(G^{\cC}_{x,y})^f$ be a fibrant replacement to $G^{\cC}_{x,y}$ in $\SpM$. Then the map $G^{\cC}_{x,y}\to (G^{\cC}_{x,y})^f$, considered in $\Fun_*(\FCW, \Top)$, translates to
$G^{\cC_\infty}_{x, y}\to  \Lin G^{\cC_\infty}_{x, y}$
under the top horizontal map.
By Lemma \ref{lemma: faithful} the last map is equivalent to
$G^{\cC_\infty}_{x, y}\to G^{\SW(\cC_\infty)}_{S(x), S(y)}.$ After applying $\cL$, this map becomes an equivalence, so we have a natural equivalence in $\Sp$
$$G^\cC_{x,y}\simeq G^{\SW(\cC_\infty)}_{S(x), S(y)}\simeq \Hom_{\SW(\cC_\infty)}(S(x),S(y)).$$
By Proposition \ref{p:stab ind} we are done.
\end{proof}

\section{The $\infty$-category of noncommutative CW-spectra}\label{s:NCWSp}
In this section we define the $\infty$-category of noncommutative CW-spectra $\NSp$. The suspension spectra of matrix algebras form a set of compact generators of $\NSp$. We denote by $\cM$ the full spectral subcategory of $\NSp$ spanned by this set of generators (see Proposition \ref{proposition: enrichment}). Thus $\cM$ is an $\Sp$-enriched $\infty$-category. We prove our main theorem: there is an equivalence of $\infty$-categories between $\NSp$ and the category of spectral presheaves on $\cM$. We give two versions and two independent proofs of this result. One version is formulated fully in the language of enriched $\infty$-categories, using Hinich's theory. In the second approach, we first define a strict version of $\cM$, denoted $\cM_s$, which is a category strictly enriched in $\SpM$. We then prove that $\NSp$ is modelled by a Quillen model category of $\SpM$-valued presheaves on $\cM_s$. Finally we prove that our two models of $\cM$ are equivalent, in the sense that $\cM$ is equivalent to the enriched $\infty$-localization of $\cM_s$ (see Definition \ref{d:enriched localization}).

Let us proceed with the definition of $\NSp$. Recall that in Section \ref{ss:complexes} we defined the $\infty$-category of {\it finite} noncommutative CW-complexes and denoted it by $\FNCW$. We then defined the $\infty$-category of {\it all} noncommutative CW-complexes by the formula
$\NCW:=\Ind(\FNCW).$
We now define the $\infty$-category of noncommutative CW-spectra to be
$$\NSp:=\Sp(\NCW).$$
By the results in Section \ref{ss:stab} we know that $\NSp$ is a presentable stable $\infty$-category.
In particular, $\NSp$ is naturally left-tensored over spectra.
%Furthermore, if we denote
%$$\NSpf:=\SW(\FNCW),$$
%we obtain that $\NSpf$ is an (essentially) small stable $\infty$-category, and we have a natural equivalence
%$$\NSp\simeq\Ind(\NSpf),$$
%compatible with $\Sigma^\infty_{\NC}$.
By \cite[Corollary 4.8.2.19]{Lur2} the monoidal structure on $\NCW$ induces a closed symmetric monoidal structure on $\NSp$, such that $\Sigma^\infty_{\NC}:\NCW\lrar\NSp$ is symmetric monoidal.

Recall that $M_n$ is the algebra of $n\times n$ matrices over $\mathbb{C}$. Since the set of objects $\{M_i\mid i\in \NN\}$ generates $\FNCW$ under finite colimits, it follows by Corollary~\ref{cor: generates} that $M:=\{\Sigma^\infty_{\NC} M_i\mid i\in \NN\}$ generates $\NSp$ under small colimits and desuspensions.
The following is one of the main definitions of the paper:
\begin{define}\label{definition: M}
Let $\cM$ be the full $\Sp$-enriched subcategory of $\NSp$ spanned by the spectra $\{\Sigma^\infty_{\NC} M_i\mid i\in \NN\}$.
\end{define}
Since $\cM$ is closed under the monoidal product in $\NSp$, the following theorem is a special case of Theorem \ref{t:modules monoidal}:
\begin{thm}\label{theorem: main presentation}
The $\Sp$-enriched category $\cM$ acquires a canonical symmetric monoidal structure, the category of presheaves $P_\Sp (\cM)$ acquires a canonical symmetric monoidal left $\Sp$-tensored structure and we have a natural symmetric monoidal left $\Sp$-tensored functor
$$P_{\Sp}(\cM)\xrightarrow{\sim}\NSp,$$
which is an equivalence of the underlying $\infty$-categories and sends each representable presheaf $Y(\Sigma^{\infty}_{\NC}M_n) \in P_{\Sp}(\cC)$ to $\Sigma^{\infty}_{\NC}M_n$.
\end{thm}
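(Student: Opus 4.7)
The plan is to verify the hypotheses of the monoidal refinement Theorem \ref{t:modules monoidal} and then invoke it directly. Since $\NCW = \Ind(\FNCW)$ with $\FNCW$ small, pointed, and finitely cocomplete, the results of Section \ref{ss:stab} give that $\NSp = \Sp(\NCW)$ is a presentable stable $\infty$-category, hence cocomplete. Moreover, $\NSp$ inherits a closed symmetric monoidal structure from $\NCW$ (by \cite[Corollary 4.8.2.19]{Lur2}), and the functor $\Sigma^\infty_{\NC} : \NCW \to \NSp$ is symmetric monoidal. This puts us in the setting required by Theorem \ref{t:modules monoidal}.

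Next, I would verify that the set $M := \{\Sigma^\infty_{\NC} M_i \mid i \in \NN\}$ satisfies the three required properties: compactness, generation, and closure under the monoidal product with the unit included. For compactness, each $M_i$ is an object of the small full subcategory $\FNCW \subseteq \NCW$, hence is compact in $\NCW = \Ind(\FNCW)$; applying Proposition \ref{p:stab ind} together with the fact that $\Sigma^\infty_{\NC}$ is the prolongation of $(\Sigma^\infty_{\NC_0})^f : \FNCW \to \SW(\FNCW)$ shows that each $\Sigma^\infty_{\NC} M_i$ is compact in $\NSp \simeq \Ind(\SW(\FNCW))$. For generation, by Definition \ref{definition: FNCW} the set $\{M_i \mid i \in \NN\}$ generates $\FNCW$ under finite colimits, so Corollary \ref{cor: generates} yields that $M$ generates $\NSp$ under arbitrary colimits and desuspensions.

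For the monoidal hypothesis, one uses that $M_n \otimes M_m \simeq M_{nm}$ in $\Csep$; since $\Sigma^\infty_{\NC}$ is symmetric monoidal, this gives $\Sigma^\infty_{\NC} M_n \otimes \Sigma^\infty_{\NC} M_m \simeq \Sigma^\infty_{\NC} M_{nm}$, so $M$ is closed under the tensor product of $\NSp$. Moreover, $M_1 = \CC$ is the unit of the tensor product of $C^*$-algebras, so $\Sigma^\infty_{\NC} M_1$ is the unit of $\NSp$ (since $\Sigma^\infty_{\NC}$ is symmetric monoidal) and lies in $M$.

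With all hypotheses verified, Theorem \ref{t:modules monoidal} applies directly: the full $\Sp$-enriched subcategory $\cM$ of $\NSp$ spanned by $M$ acquires a canonical symmetric monoidal $\Sp$-enriched structure, $P_\Sp(\cM)$ acquires its Day convolution symmetric monoidal left $\Sp$-tensored structure, and the induced functor $P_\Sp(\cM) \to \NSp$ is a symmetric monoidal equivalence of left $\Sp$-tensored $\infty$-categories sending $Y(\Sigma^\infty_{\NC} M_n)$ to $\Sigma^\infty_{\NC} M_n$. The only conceptually non-trivial point to keep an eye on is that the ``canonical'' symmetric monoidal enrichment on $\cM$ produced by Hinich's formalism genuinely restricts from $\NSp$'s structure — this is where the extension of Proposition \ref{proposition: enrichment} to the symmetric monoidal setting (recorded in Theorem \ref{t:modules monoidal}) is essential, and where I expect the main work really to live, though in our context it has already been encapsulated in that theorem.
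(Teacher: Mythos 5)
Your proposal is correct and follows exactly the paper's route: the paper likewise deduces the theorem as a special case of Theorem \ref{t:modules monoidal}, using Corollary \ref{cor: generates} for generation by the suspension spectra of matrix algebras and the closure of this set under the tensor product. Your write-up merely spells out a few hypotheses (compactness, the unit $\Sigma^\infty_{\NC} M_1$) that the paper leaves implicit.
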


% Note that under the identification $\SpM_\infty\simeq \Sp$, for every two objects $x,y$ in $\cC$ we have
%$$\Omega^{\infty}\Hom_\cC(x,y)= \Omega^{\infty}G^\cC_{x,y}= G^\cC_{x,y}(S^0)= \Map_{\cC}(x,y).$$

\subsubsection{Strictification of $\cM$}

In this subsection we give a strict model for the category $\cM$ as a monodial spectrally enriched category, as well as a strict version of Theorem \ref{theorem: main presentation}.

% In this subsection we give a strict model for a monoidal functor $\FNCW\to P_{\Sp}(\cM_s)$ which induces an equivalence of $\infty$-categories $\NSp\xrightarrow{\simeq} P_{\Sp}(\cM_s)_\infty$.

In the context of Section \ref{ss: spectral topological}, let us consider the example $\cC=\FNCW$, considered as a topological category. Then $\FNCW$ is a pointed topological category, tensored closed over $\FCW$ (see Definition \ref{d:CW_f tensored}).
As explained in Definition \ref{d:SpM enrichment}, we have a strict enrichment of $\FNCW$ over the model category of spectra $\SpM$ using the functors $G^{\FNCW}_{x, y}$.

The topological category $\FNCW$ has a continuous symmetric monoidal structure induced by tensor product in $\Csep$. The spectral enrichment respects the monoidal structure, in the sense that given objects $x, x_1, y, y_1$, there is a natural transformation
\[
G_{x, y}^{\FNCW}(K)\wedge G_{x_1, y_1}^{\FNCW}(L)\to G_{x\otimes x_1, y\otimes y_1}^{\FNCW}(K\wedge L).
\]
Thus the spectral enrichment of $\FNCW$ is symmetric monoidal.

\begin{define}\label{d:M_s}
  Let $\cM_s$ be the full (strict) $\SpM$-enriched subcategory of $\FNCW$ spanned by $\{M_n\mid n\in\NN\}$. That is, the objects of $\cM_s$ are $\{M_n\mid n\in\NN\}$ and for any $m,n\in\NN$ we have
  $$\Hom_{\cM_s}(M_m,M_n)=G^{\FNCW}_{M_m, M_n}\in\SpM.$$
\end{define}

Since $\cM_s$ is a category enriched over $\SpM$, we can define the strict category of spectral presheaves on $\cM_s$, which we denote by $P_{\SpM}(\cM_s)$, to be the category of enriched functors $\cM_s^{\op}\to \SpM$. We endow $P_{\SpM}(\cM_s)$ with the projective model structure (see, for example, \cite{GM} on the projective model structure in the enriched setting). Since both $\cM_s^{\op}$ and $\SpM$ have a symmetric monoidal structure, the category $P_{\SpM}(\cM_s)$ has a symmetric monoidal structure given by enriched Day convolution turning it into a symmetric monoidal model category.

Consider $\FNCW$ as a category enriched in $\SpM$. There is a canonical strict spectral functor
\begin{equation}\label{eq: strict}
RY\colon \FNCW\to P_{\SpM}(\cM_s),
\end{equation}
which is the composition of the enriched Yoneda embedding $\FNCW\to P_{\SpM}(\FNCW)$ followed by restriction $P_{\SpM}(\FNCW)\to P_{\SpM}(\cM_s)$. It is well-known, and easy to check that $RY$ is lax symmetric monoidal.

We call a map $A\to B$ in $\FNCW$ a weak equivalence if it is a homotopy equivalence in $\FNCW$ considered as a topological category.

\begin{lem}
  The functor $RY$ sends weak equivalences to weak equivalences.
\end{lem}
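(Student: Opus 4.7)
The plan is to unpack both sides and reduce to a pointwise statement. The projective model structure on $P_{\SpM}(\cM_s)$ has as its weak equivalences the $\SpM$-enriched natural transformations $F \to G$ that are pointwise weak equivalences in $\SpM$, i.e.\ such that $F(M_n) \to G(M_n)$ is a stable equivalence in $\SpM$ for every $n$. Since $RY$ is the enriched Yoneda embedding followed by restriction to $\cM_s$, we have
\[
RY(A)(M_n) \;=\; \Hom_{\FNCW}(M_n, A) \;=\; G^{\FNCW}_{M_n, A}
\]
by definition of the $\SpM$-enrichment of $\FNCW$ in Definition \ref{d:SpM enrichment}. So it is enough to show that if $f\colon A \to B$ is a homotopy equivalence in the topological category $\FNCW$, then for every $n$ the induced morphism of spectra $G^{\FNCW}_{M_n, A} \to G^{\FNCW}_{M_n, B}$ is a stable equivalence in $\SpM$.

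Next I would strengthen the target: because the stable model structure on $\SpM$ is a left Bousfield localization of the projective (level) model structure, any map that is a levelwise weak equivalence is automatically a stable equivalence. So it suffices to prove that for every $n$ and every finite pointed CW-complex $K$, the map
\[
G^{\FNCW}_{M_n, A}(K) \;=\; \Map_{\FNCW}(M_n, K\wedge A) \;\longrightarrow\; \Map_{\FNCW}(M_n, K\wedge B) \;=\; G^{\FNCW}_{M_n, B}(K)
\]
is a weak equivalence in $\Top$.

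This reduces to two standard facts about enriched homotopy equivalences. First, the tensoring functor $K \wedge (-)\colon \FNCW \to \FNCW$ is a continuous functor between topological categories, and any continuous functor sends homotopy equivalences to homotopy equivalences (apply the functor to a chosen homotopy inverse and to the two homotopies witnessing the equivalence). Hence $K \wedge f\colon K\wedge A \to K\wedge B$ is a homotopy equivalence in $\FNCW$. Second, the corepresentable functor $\Map_{\FNCW}(M_n, -)\colon \FNCW \to \Top$ is likewise a continuous functor, so it sends the homotopy equivalence $K\wedge f$ to a homotopy equivalence of topological spaces, which is in particular a weak equivalence in $\Top$.

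I do not expect any serious obstacle here; the proof is essentially a bookkeeping exercise once one recalls that the weak equivalences in $P_{\SpM}(\cM_s)$ are tested on each $M_n$ and each $K$, and that continuous functors between topological categories preserve homotopy equivalences. The only mild subtlety is to remember to invoke the fact that a levelwise weak equivalence of functors $\FCW \to \Top$ is a stable equivalence in $\SpM$, so that no cofibrant replacement is needed before taking $\Omega^\infty$-style invariants.
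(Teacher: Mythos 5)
Your proof is correct and follows essentially the same route as the paper's: reduce to a levelwise statement via the projective model structure, use that the stable model structure is a localization of the levelwise one, and then observe that continuous functors out of a topological category preserve homotopy equivalences. You simply spell out the final step (applying $K\wedge(-)$ and $\Map_{\FNCW}(M_n,-)$ to a homotopy inverse and the homotopies) in more detail than the paper does.
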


\begin{proof}
  Let $A\to B$ be a weak equivalence in $\FNCW$. We need to show that $RY(A)\to RY(B)$ is a levelwise weak equivalence in $P_{\SpM}(\cM_s)$. Let $n\geq 1$. We need to show that the induced map $\Hom_{\FNCW}(M_n,A)\to\Hom_{\FNCW}(M_n,B)$ is a weak equivalence in $\SpM$. Since $\SpM$ is a localization of the projective model structure, is enough to show that $\Hom_{\FNCW}(M_n,A)\to\Hom_{\FNCW}(M_n,B)$ is a levelwise weak equivalence in $\Fun_*(\FCW, \Top)$. That is, it is enough to show that for every finite pointed CW-complex $K$,
  $$\Map_{\FNCW}(M_n,K\wedge A)\to\Map_{\FNCW}(M_n,K\wedge B)$$
  is a weak equivalence. Since $\FNCW$ is a topological category, and a weak equivalence in $\FNCW$ is just a homotopy equivalence, we are done.
\end{proof}

%We have defined $\cC$ as a category enriched in $\SpM$. Let $P_{\SpM}(\cC)$ be the category of spectral presheaves on $\cC$, that is, the category of $\SpM$-enriched functors $\cC^{\op}\to \SpM$. We have the enriched Yoneda embedding
%$$Y:\cC\to P_{\SpM}(\cC).$$
%Suppose that for every two objects $x,y$ of $\cC$, $\Hom_\cC(x,y)$ is cofibrant in $\SpM$, and consider the projective model structure on $P_{\SpM}(\cC)$.

By the lemma above, we can apply $\infty$-localization with respect to weak equivalences (see Remark \ref{r:coherent}) to $RY$ and obtain a functor of $\infty$-categories
$$RY_\infty\colon \FNCW_\infty\to P_{\SpM}(\cM_s)_\infty.$$

\begin{lem}
  The $\infty$-category $\FNCW_\infty$ is naturally equivalent to $\FNCW$ defined above as the topological nerve of the topological category $\FNCW$.
\end{lem}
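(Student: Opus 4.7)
The goal is to identify $\FNCW_\infty$, the $\infty$-localization of the ordinary category underlying $\FNCW$ at the homotopy equivalences, with the $\infty$-category obtained as the topological nerve of the topological category $\FNCW$. My plan is to reduce this to a general comparison theorem: for a ``fibrant'' topological (or equivalently, after applying $\Sing$, simplicial) category $\cC$, the topological nerve $\N^{\Top}(\cC)$ is canonically equivalent to the $\infty$-localization of the underlying ordinary category $\cC_{ord}$ at the class of homotopy equivalences.

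The first step is to verify the hypotheses of this comparison in the case at hand. The category $\FNCW$ is topologically enriched with mapping spaces in compactly generated weak Hausdorff spaces (by Definition~\ref{definition: FNCW}); applying $\Sing$ termwise yields a simplicial category whose hom-sets are Kan complexes, hence a fibrant simplicial category. Moreover, by our convention, the weak equivalences in the relative category $(\FNCW, \cW)$ are precisely the homotopy equivalences of $C^*$-algebras, which coincide with the simplicial homotopy equivalences detected by the fibrant replacement $\Sing \Map_{\FNCW}(-,-)$.

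The second step is to invoke the general comparison result. A clean formulation appears (in the simplicial setting) in the work of Hinich on $\infty$-localization (see \cite{Hin1}) and is also reviewed in \cite[Section 2.2]{BHH}: the obvious functor
\[
\cC_{ord} \longrightarrow \N^{\Delta}(\cC),
\]
which sends homotopy equivalences to equivalences in $\N^{\Delta}(\cC)$, exhibits $\N^{\Delta}(\cC)$ as the $\infty$-localization $\cC_{ord}[\cW^{-1}]$ whenever $\cC$ is fibrant. Applied to $\Sing \FNCW$, the simplicial nerve agrees tautologically with the topological nerve of the original topological category, so we get the desired canonical equivalence $\FNCW_\infty \simeq \N^{\Top}(\FNCW)$. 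The naturality follows because the comparison functor is induced by the canonical map $\cC_{ord}\to \N^{\Top}(\cC)$, which is itself natural in $\cC$.

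The main obstacle is citing or formulating the comparison theorem with the correct level of generality: one must make sure that the version used really produces an equivalence of $\infty$-categories (not just of homotopy categories) and that the ``fibrancy'' hypothesis is satisfied by a topological category whose mapping spaces are only assumed to be in compactly generated weak Hausdorff spaces (not a priori CW). This is handled by the $\Sing$ step, since $\Sing$ of any topological space is a Kan complex. Once this foundational point is pinned down, the lemma follows formally.
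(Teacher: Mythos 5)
The key step of your argument --- the ``general comparison theorem'' asserting that for any fibrant topological (or simplicial) category $\cC$ the coherent nerve is the $\infty$-localization of the underlying ordinary category at the homotopy equivalences --- is false at that level of generality, and this is exactly where the real content of the lemma lies. Consider the one-object simplicial category whose endomorphism object is a Kan simplicial group $G$ with nontrivial higher homotopy groups. Its underlying ordinary category is the discrete group $G_0$ of zero-simplices; every morphism is already invertible, so the $\infty$-localization at the homotopy equivalences is the ordinary nerve of $G_0$, a $1$-truncated $\infty$-groupoid, whereas the coherent nerve is the delooping of $G$ and has nontrivial higher homotopy. Fibrancy of the mapping spaces does not help: the underlying ordinary category forgets all higher simplices of the mapping spaces, and localization cannot formally recover them. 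Neither \cite{Hin1} nor \cite[Section 2.2]{BHH} contains a statement of the form you invoke; what they provide is the definition and functoriality of $\infty$-localization, not an identification of a localization with a coherent nerve.

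What makes the conclusion true here is extra structure specific to $C^*$-algebras. The paper's proof equips $\Csep^{\op}$ with a category-of-cofibrant-objects structure (weak equivalences the homotopy equivalences, cofibrations the Schochet cofibrations) and then appeals to \cite[Proposition 3.17]{BJM}, which is the genuinely nontrivial input: it shows that the point-norm mapping spaces compute the derived mapping spaces of the $\infty$-localization of $\Csep^{\op}$. One must then still pass to the full subcategory $\FNCW$; since $\infty$-localization does not in general commute with restriction to full subcategories, the paper verifies that $\FNCW$ inherits a cofibrant-objects structure (being closed under weak equivalences and pushouts along cofibrations) and that the induced functor $(\FNCW)_\infty\to(\Csep^{\op})_\infty$ is fully faithful, as in \cite[Lemma 7.1.1]{BHH}. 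To repair your proof you would need to replace the formal comparison theorem with an argument of this kind, i.e., with some mechanism (a (co)fibration-category or model-category structure together with a Dwyer--Kan-type comparison) guaranteeing that the topological enrichment of $\FNCW$ already computes derived mapping spaces.
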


\begin{proof}
  The category $\Csep^{\op}$ (defined in the beginning of Section \ref{ss:complexes}) has the structure of a category of cofibrant objects with the weak equivalences given by the homotopy equivalences and the cofibrations by Schochet cofibrations (see, for instance, in \cite{AG,Uuy}).
We say that a map in $\FNCW$ is a weak equivalence (resp. cofibration) if it is a weak equivalence (resp. cofibration) when regarded as a map in $\Csep^{\op}$. Since $\Csep^{\op}$ is a category of cofibrant objects and $\FNCW \subseteq \Csep^{\op}$ is a full subcategory which is closed under weak equivalences and pushouts along cofibrations it follows that $\FNCW$ inherits a structure of a category of cofibrant objects. In exactly the same way as in \cite[Lemma 7.1.1]{BHH} one can show that the natural map between $\infty$-localizations with respect to weak equivalences:
\[ (\FNCW)_{\infty} \lrar (\Csep^{\op})_\infty \]
is fully faithful.

By \cite[Proposition 3.17]{BJM} we have that the $\infty$-localization of $\Csep^{\op}$  is equivalent to the topological nerve of the topological category structure on $\Csep^{\op}$ described in Section \ref{ss:complexes} (see Remark \ref{r:coherent} and the paragraph before). Since $\FNCW$ is a full topological subcategory of $\Csep^{\op}$, we are done.
\end{proof}

\begin{lem}
The functor $RY_\infty$  preserves finite colimits.
\end{lem}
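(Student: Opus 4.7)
The plan is to verify that $RY_\infty$ preserves finite colimits by testing after evaluation at each $M_n$, reducing the claim to the known fact that $\Sigma^\infty_{\NC}$ preserves finite colimits and that $\NSp$ is stable.

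First I would observe that the evaluation functors $\ev_{M_n}\colon P_{\SpM}(\cM_s)_\infty \to \SpM_\infty \simeq \Sp$, for $n \geq 1$, are jointly conservative and preserve all small colimits. Joint conservativity is immediate from the fact that weak equivalences in the projective model structure on $P_{\SpM}(\cM_s)$ are detected levelwise on the set of objects $\{M_n\}_{n\geq 1}$, and colimit-preservation follows because homotopy colimits in the projective model structure are likewise computed pointwise (and this behaviour is preserved by $\infty$-localization).

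Second, I would identify the composite $\ev_{M_n} \circ RY_\infty$. By construction, for $X \in \FNCW$ we have $\ev_{M_n}(RY(X)) = G^{\FNCW}_{M_n, X} \in \SpM$, and Theorem~\ref{t:mapping} supplies a natural equivalence
\[
G^{\FNCW}_{M_n, X} \simeq \Hom_{\NSp}(\Sigma^\infty_{\NC} M_n,\, \Sigma^\infty_{\NC} X)
\]
in $\Sp$ under the identification $\SpM_\infty \simeq \Sp$. Hence $\ev_{M_n} \circ RY_\infty$ is naturally equivalent, as a functor $\FNCW_\infty \to \Sp$, to $\Hom_{\NSp}(\Sigma^\infty_{\NC} M_n,\, \Sigma^\infty_{\NC}(-))$.

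Third, I would check that this last functor preserves finite colimits. The suspension spectrum functor $\Sigma^\infty_{\NC}\colon \FNCW_\infty \to \NSp$ factors as $\Sigma^\infty_{\NCW} \circ j$, where $j\colon \FNCW_\infty \to \NCW$ is the canonical inclusion into the ind-completion (preserving finite colimits by the universal property of $\Ind$) and $\Sigma^\infty_{\NCW}\colon \NCW \to \NSp$ is a left adjoint, hence preserves all colimits. Therefore $\Sigma^\infty_{\NC}$ preserves finite colimits. Since $\NSp$ is stable, the functor $\Hom_{\NSp}(y, -)\colon \NSp \to \Sp$ is exact for any object $y$; in particular it preserves finite colimits. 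Composing, $\Hom_{\NSp}(\Sigma^\infty_{\NC} M_n, \Sigma^\infty_{\NC}(-))$ preserves finite colimits.

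Combining: given a finite diagram $D$ in $\FNCW_\infty$ with colimit $X$, the natural comparison map $\colim RY_\infty(D) \to RY_\infty(X)$ in $P_{\SpM}(\cM_s)_\infty$ becomes, after applying $\ev_{M_n}$ and using that $\ev_{M_n}$ preserves colimits, a map whose source is $\colim (\ev_{M_n} \circ RY_\infty)(D)$ and whose target is $(\ev_{M_n} \circ RY_\infty)(X)$; by the previous paragraph this is an equivalence in $\Sp$ for each $n$. Joint conservativity of the evaluations then forces the original map to be an equivalence in $P_{\SpM}(\cM_s)_\infty$, completing the proof. I expect the main technical subtlety to be ensuring that the naturality of the Theorem~\ref{t:mapping} identification and the pointwise description of projective colimits survive the passage to $\infty$-localization; this is supplied by the results of Subsection~\ref{ss:enriched model} (applied to $\SpM$ and to the projective model structure on $\SpM$-valued presheaves).
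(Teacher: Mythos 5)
Your proof is correct, and while the first half coincides with the paper's strategy, the key step is genuinely different. The paper also reduces (via [Lur1, Corollary 4.4.2.5]) to initial objects and pushouts, and also checks the pushout condition after evaluation at each $M_n$, since (homotopy) pushouts in the projective model structure are levelwise. But to see that $X\mapsto \Lin G^{\FNCW}_{M_n,X}$ sends pushouts in $\FNCW_\infty$ to pushouts of spectra, the paper stays entirely inside the strict linearization picture: it observes that the linearization of $K\mapsto\Map_{\FNCW}(M_n,K\wedge y)$ in the variable $K$ is computed by the same colimit as the linearization of $y\mapsto\Map_{\FNCW}(M_n,K\wedge y)$ in the variable $y$, and then invokes Lurie's criterion (Lemma 6.1.1.33) to conclude that the latter is $1$-excisive, hence turns homotopy pushouts into homotopy pullbacks, which are homotopy pushouts in the stable target. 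You instead route through Theorem \ref{t:mapping} to identify $\mathrm{ev}_{M_n}\circ RY_\infty$ with $\Hom_{\NSp}(\Sigma^\infty_{\NC}M_n,\Sigma^\infty_{\NC}(-))$ and then use exactness of $\Sigma^\infty_{\NC}$ together with stability of $\NSp$. This is not circular (Theorem \ref{t:mapping} is established in Section \ref{ss:stab} independently of this lemma) and is arguably shorter given what is already available; its cost is exactly the point you flag, namely that the equivalence of Theorem \ref{t:mapping} must be promoted to a natural equivalence of $\infty$-functors in the second variable on $\FNCW_\infty$. That naturality is indeed available, since every comparison map in the proof of Theorem \ref{t:mapping} (the fibrant replacement $G\to G^f$, the linearization map, and the map of Lemma 4.8) is the localization of a strict natural transformation; the paper's argument simply avoids having to say this by never leaving the strict model.
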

\begin{proof}
By~\cite[Corollary 4.4.2.5]{Lur1}, it is enough to prove that the functor preserves initial objects and pushout squares.
Since $\FNCW$ is a pointed category, the inital object of $\FNCW$ is also the final object, and the first condition obviously holds.

In both $\FNCW_\infty$ and $P_{\SpM}(\cM_s)_\infty$ pushouts can be calculated as homotopy pushouts in an appropriate structure.
Suppose we have a homotopy pushout diagram in $\FNCW$
\begin{equation}\label{eq: pushout}
\begin{array}{ccc}
y_0 & \to & y_1 \\
\downarrow & & \downarrow \\
y_2 & \to & y_{12}.
\end{array}
\end{equation}
We want to prove that for any $x\in \cM_s$ the induced diagram of functors is a homotopy pushout in the stable model structure
\[
\begin{array}{ccc}
\Map_{\FNCW}(x, -\wedge y_0) & \to & \Map_{\FNCW}(x, -\wedge y_1) \\
\downarrow & & \downarrow \\
\Map_{\FNCW}(x, -\wedge y_2) & \to & \Map_{\FNCW}(x, -\wedge y_{12}).
\end{array}
\]
Since we are working in the stable model structure, a square is a homotopy pushout if and only if it is a homotopy pullback. A square of functors is a homotopy pullback in the stable model structure if the induced square of linearizations is a homotopy pullback. But the linearization of the functor
\[
\Map_{\FNCW}(x, -\wedge y)\colon \FCW\to \Top
\]
evaluated at $K$ is the same as the linearization of the functor
\[
\Map_{\FNCW}(x, K\wedge -)\colon \FNCW\to \Top
\]
evaluated at $y$. Indeed, the two linearizations are given by the equivalent formulas
\[
\hocolim_{n\to \infty}  \Omega^n \Map_{\FNCW}(x, \Sigma^n K\wedge y)= \hocolim_{n\to \infty} \Omega^n \Map_{\FNCW}(x, K\wedge \Sigma^n_{\FNCW} y).
\]
We have been thinking of the functor $\Map_{\FNCW}(x, K\wedge -)\colon \FNCW\to \Top$ as a strict functor, but now let us think of it as a functor between $\infty$-categories by applying the $\infty$-localization. The $\infty$-category $\FNCW$ has finite colimits and a final object. The conditions of~\cite[Lemma 6.1.1.33]{Lur1} are satisfied, and therefore the linearization of this functor really is linear, i.e., takes homotopy pushout squares to homotopy pullback squares. Therefore applying the linearization to the square~\eqref{eq: pushout} yields a homotopy pullback square, which is what we wanted to prove.
\end{proof}

Since $P_{\SpM}(\cM_s)_\infty$ is a stable $\infty$-category, we have, by the lemma above, that $RY_\infty$ extends canonically to a finite-colimit preserving functor \[RY_\infty:\SW(\FNCW)\to P_{\SpM}(\cM_s)_\infty.\] This functor extends, in turn, to an all-small-colimit-preserving functor
\begin{equation}\label{eq: infinity}
RY_\infty\colon \NSp=\Ind(\SW(\FNCW))\to P_{\SpM}(\cM_s)_\infty.
\end{equation}
This functor takes an object $\Sigma^\infty_{\NC} M_n$ to the presheaf represented by $M_n$.

\begin{thm}\label{theorem: main presentation strict}
The functor $RY_\infty\colon \NSp \to P_{\SpM}(\cM_s)_\infty$ is an equivalence of $\infty$-categories.
\end{thm}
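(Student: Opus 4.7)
The plan is to verify that $RY_\infty$ is a colimit-preserving functor between stable, compactly generated $\infty$-categories that sends a set of compact generators of $\NSp$ to a set of compact generators of $P_{\SpM}(\cM_s)_\infty$, and is fully faithful on these generators. By construction $RY_\infty$ preserves small colimits, and it sends each $\Sigma^\infty_{\NC}M_n$ to the representable presheaf $Y(M_n)$. Since $\{\Sigma^\infty_{\NC}M_n\mid n\geq 1\}$ generates $\NSp$ by Corollary~\ref{cor: generates}, it suffices to check the two remaining conditions on the target side: compact generation by representables, and the mapping-spectrum comparison.

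First I would confirm that $P_{\SpM}(\cM_s)$ with the projective model structure is a stable, combinatorial model category, inheriting these properties from $\SpM$ levelwise. Its $\infty$-localization is therefore a stable, presentable $\infty$-category. The representable presheaves $Y(M_n)$ are projectively cofibrant and corepresent evaluation at $M_n$, hence remain compact in the $\infty$-localization. That they generate under colimits and desuspensions is a standard consequence of the (derived) enriched Yoneda lemma: every object of $P_{\SpM}(\cM_s)_\infty$ is canonically a colimit of representables, because the same holds at the level of simplicial resolutions in the strict model category and localization commutes with the relevant colimits.

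The crucial step is the mapping-spectrum comparison. On the source, $\Hom_{\NSp}(\Sigma^\infty_{\NC}M_m,\Sigma^\infty_{\NC}M_n)$ is by definition $\cM(M_m,M_n)$. On the target, by enriched Yoneda in $\SpM$, the derived mapping spectrum $\RR\Hom_{P_{\SpM}(\cM_s)}(Y(M_m),Y(M_n))$ is the stable-fibrant replacement of $Y(M_n)(M_m)=\Hom_{\cM_s}(M_m,M_n)=G^{\FNCW}_{M_m,M_n}$ in $\SpM$. By Theorem~\ref{t:mapping}, this fibrant replacement computes precisely $\Hom_{\NSp}(\Sigma^\infty_{\NC}M_m,\Sigma^\infty_{\NC}M_n)$, and tracing through the construction~\eqref{eq: strict} of $RY$ identifies the comparison map induced by $RY_\infty$ with the canonical one from the strict hom to its linearization.

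With these ingredients in hand, $RY_\infty$ is a colimit-preserving functor between cocomplete stable $\infty$-categories that is fully faithful on a common set of compact generators; the unit–counit argument from the proof of Theorem~\ref{t:modules} then shows it is an equivalence. Alternatively, the fully-faithful comparison on generators can be packaged via Corollary~\ref{c:fully faithful} to obtain $(\cM_s)_\infty\simeq\cM$, after which Theorem~\ref{t:modules} identifies $P_{\SpM}(\cM_s)_\infty\simeq P_\Sp(\cM)$ and Theorem~\ref{theorem: main presentation} completes the identification with $\NSp$. The main obstacle is precisely the careful comparison between the model-categorical mapping spectrum in $P_{\SpM}(\cM_s)$ and the $\infty$-categorical mapping spectrum in $\NSp$, which must be handled by Theorem~\ref{t:mapping} together with the enriched-localization machinery of Section~\ref{ss:enriched model}; once that comparison is in place, the rest of the argument is a standard generators-and-colimits argument.
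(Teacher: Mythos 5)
Your proposal is correct and follows essentially the same route as the paper: establish full faithfulness on the suspension spectra of matrix algebras by identifying both sides of the comparison map with the stabilization of $G^{\FNCW}_{M_k,M_l}$ (via the strict enriched Yoneda lemma, cofibrancy of representables, and Lemma~\ref{lemma: faithful}), then extend to all of $\NSp$ by a colimit argument in each variable and deduce essential surjectivity from the fact that the colimit-closed image contains the representables. The only caution is your parenthetical ``alternative'' route: deducing $P_{\SpM}(\cM_s)_\infty\simeq P_\Sp(\cM)$ from $(\cM_s)_\infty\simeq\cM$ is not directly an instance of Theorem~\ref{t:modules}, and in the paper the equivalence $(\cM_s)_\infty\simeq\cM$ is itself proved \emph{using} Theorem~\ref{theorem: main presentation strict}, so that route as stated would be circular.
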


\begin{proof}
First let us prove that $RY_\infty$ is fully faithful, that is, that for all objects $x, y$ of $\NSp$ the map of spectral mapping functors
\begin{equation}\label{eq: comparison}
G^{\NSp}_{x, y}\to G^{P_{\SpM}(\cM_s)_\infty}_{RY_\infty(x), RY_\infty(y)}
\end{equation}
is an equivalence. First, consider the case $x, y\in \Sigma^\infty\cM_s$, i.e., $x=\Sigma^\infty_{\NC} M_k, y=\Sigma^\infty_{\NC} M_l$ for some $k, l$. In this case $x, y$ are in the image of the finite suspension functor $\FNCW\to \SW(\FNCW)$. The functor $\SW(\FNCW)\to \NSp$ is fully faithful, so it induces an equivalence
\[
G^{\SW(\FNCW)}_{x, y}\xrightarrow{\simeq} G^{\NSp}_{x, y}.
\]
By Lemma~\ref{lemma: faithful}, the map $G^{\FNCW}_{M_k,M_l} \to G^{\SW(\FNCW)}_{x, y}$ is stabilization.

The functor $RY:\FNCW\to P_{\SpM}(\cM_s)$ when restricted to $\cM_s$ is just the enriched Yoneda embedding of $\cM_s$
$$Y:\cM_s\to P_{\SpM}(\cM_s).$$
Since the unit in $\SpM$ is cofibrant, $RY_\infty(\Sigma^\infty M_k)=Y(M_k)$ is cofibrant in the projective model structure on $P_{\SpM}(\cM_s)$ (see \cite[Theorem 4.32]{GM}). The fibrant replacement in $P_{\SpM}(\cM_s)$ is levelwise, so using the (strict) enriched Yoneda lemma we get
$$G^{P_{\SpM}(\cM_s)_\infty}_{RY_\infty(x), RY_\infty(y)}= \Hom_{P_{\SpM}(\cM_s)_\infty}(Y(M_k),Y(M_l))\simeq$$
$$\Hom_{P_{\SpM}(\cM_s)}(Y(M_k),Y(M_l)^f)\cong (Y(M_l)^f)(M_k)\simeq
Y(M_l)(M_k)^f =(G^{\FNCW}_{M_k,M_l})^f.$$
But the map $G^{\FNCW}_{M_k,M_l}\to (G^{\FNCW}_{M_k,M_l})^f$ translates to the stabilization
$G^{\FNCW}_{M_k, M_l}\to  \Lin G^{\FNCW}_{M_k, M_l}$
under $\infty$-localization (see the proof of Theorem \ref{t:mapping}).
Thus the map
\[
G^{\FNCW}_{M_k,M_l} \to  G^{P_{\SpM}(\cM_s)_\infty}_{RY_\infty(x), RY_\infty(y)}
\]
is also the stabilization. By the uniqueness of the stabilization map, we get that the map~\eqref{eq: comparison}
is an equivalence in the case when $x, y$ are suspension spectra of matrix algebras.

Next, let $x$ be a fixed suspension spectrum of a matrix algebra, but let $y$ vary. We may consider the functor $y\mapsto G^{\NSp}_{x, y}$ as a functor $\NSp\to \Sp$. This functor preserves all small colimits, because $x$ is compact in $\NSp$ and both $\NSp$ and $\Sp$ are stable. Similarly, the functor $y\mapsto G^{P_{\SpM}(\cM_s)_\infty}_{RY_\infty(x), RY_\infty(y)}$ is also a functor $\NSp \to \Sp$ that preserves small colimits. It follows that the category of objects $y$ for which the map~\eqref{eq: comparison} is an equivalence is closed under colimits and also desuspensions. Since this category contains $\cM_s$, it is all of $\NSp$.

Now fix $y$, and consider the functors $x\mapsto G^{\NSp}_{x, y}, G^{P_{\SpM}(\cM_s)_\infty}_{RY_\infty(x), RY_\infty(y)}$ as contravariant functors from $\NSp$ to spectra. Since both functors take small colimits to limits, a similar argument shows that this map is an equivalence for all $x\in \NSp$.

We have shown that the functor $RY_\infty\colon \NSp\to P_{\SpM}(\cM_s)_\infty$ is fully faithful and also preserves all small colimits, so it is a left adjoint. It follows that the image of $RY_\infty$  is closed under small colimits. Since the image contains the representable presheaves, $RY_\infty$ is essentially surjective. It follows that $RY_\infty$ is an equivalence of categories. \end{proof}

Thus $RY_\infty$ is an explicit monoidal model for the inverse of the equivalence given by Theorem~\ref{theorem: main presentation}. This also allows us to show that $\cM_s$ is indeed a strictification of $\cM$ from Definition \ref{definition: M}.

\begin{thm}
  We have a natural equivalence
  $(\cM_s)_\infty\simeq\cM$
  between the enriched $\infty$-localization of $\cM_s$ and $\cM$.
\end{thm}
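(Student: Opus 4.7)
The plan is to apply the machinery of enriched $\infty$-localization (specifically the theorem preceding Corollary~\ref{c:fully faithful}) to the strict $\SpM$-enriched Yoneda embedding restricted to $\cM_s$, and then transport the result along the equivalence $RY_\infty$ of Theorem~\ref{theorem: main presentation strict}.

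\medskip

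More precisely, I would proceed as follows. Consider the strict $\SpM$-enriched Yoneda embedding
\[
Y \colon \cM_s \longrightarrow P_{\SpM}(\cM_s),
\]
obtained by restricting the enriched Yoneda embedding of $P_{\SpM}(\cM_s)$ to its full subcategory $\cM_s$. By the strict enriched Yoneda lemma, $Y$ is strictly $\SpM$-fully faithful. Since the unit of $\SpM$ is cofibrant, each representable $Y(M_k)$ is cofibrant in the projective model structure on $P_{\SpM}(\cM_s)$ (as already used in the proof of Theorem~\ref{theorem: main presentation strict}). Therefore, taking $F = Y$, $F(x)^c = Y(M_k)$, and noting that fibrant replacement in the projective model structure is levelwise, the enriched Yoneda lemma identifies
\[
\Hom_{P_{\SpM}(\cM_s)}\bigl(Y(M_k)^c, Y(M_l)^f\bigr) \cong Y(M_l)^f(M_k) = \bigl(G^{\FNCW}_{M_k,M_l}\bigr)^f,
\]
and the canonical map from $\cM_s(M_k,M_l) = G^{\FNCW}_{M_k,M_l}$ to this spectrum is exactly the fibrant replacement map, hence a weak equivalence in $\SpM$. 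This verifies that $Y$ is homotopy fully faithful in the sense of Definition~\ref{d:enriched localization}.

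\medskip

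By the theorem preceding Corollary~\ref{c:fully faithful}, the enriched $\infty$-localization
\[
Y_\infty \colon (\cM_s)_\infty \longrightarrow P_{\SpM}(\cM_s)_\infty
\]
is $\SpM_\infty$-fully faithful. Under the symmetric monoidal equivalence $\SpM_\infty \simeq \Sp$ recalled in Remark~\ref{r:G} and the paragraph that follows it, this is the same as being $\Sp$-fully faithful. Composing $Y_\infty$ with the inverse of the equivalence $RY_\infty \colon \NSp \xrightarrow{\simeq} P_{\SpM}(\cM_s)_\infty$ supplied by Theorem~\ref{theorem: main presentation strict}, we obtain an $\Sp$-fully faithful functor
\[
\Phi \colon (\cM_s)_\infty \longrightarrow \NSp.
\]
Since $RY_\infty$ sends $\Sigma^\infty_{\NC} M_k$ to the representable presheaf $Y(M_k)$, the functor $\Phi$ sends each object $M_k$ of $(\cM_s)_\infty$ to $\Sigma^\infty_{\NC} M_k$. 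Thus $\Phi$ factors through the full $\Sp$-enriched subcategory spanned by $\{\Sigma^\infty_{\NC} M_k\}_{k\in\NN}$, which is $\cM$ by Definition~\ref{definition: M}, and $\Phi$ is essentially surjective onto $\cM$. An $\Sp$-fully faithful and essentially surjective functor is an equivalence of $\Sp$-enriched $\infty$-categories, yielding the desired equivalence $(\cM_s)_\infty \simeq \cM$.

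\medskip

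The main obstacle in carrying this out is bookkeeping of the enriched structures: we must take care that the $\Sp$-enrichment of $P_{\SpM}(\cM_s)_\infty$ arising from its presentable stable structure matches the one coming from the tensoring over $\SpM_\infty \simeq \Sp$, and that the equivalence $RY_\infty$ of Theorem~\ref{theorem: main presentation strict} is compatible with these enrichments. Both follow because $RY_\infty$ is a colimit-preserving equivalence of stable presentable $\infty$-categories (so it automatically preserves the canonical spectral enrichment), and because the identification $\SpM_\infty \simeq \Sp$ respects Day convolution and the canonical action on presheaf categories, as recorded in Subsection~\ref{ss:enriched model}. Once these identifications are in place, the argument assembles from the pieces already developed in the paper.
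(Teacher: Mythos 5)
Your proof is correct and follows the same overall strategy as the paper's: embed $\cM_s$ into $P_{\SpM}(\cM_s)$ via the enriched Yoneda embedding, show the localized functor is $\Sp$-fully faithful, and transport the result along the equivalence $RY_\infty$ of Theorem~\ref{theorem: main presentation strict} to identify the essential image with $\cM$. The one genuine difference is in the mechanics of the fully-faithfulness step. The paper first takes a fibrant replacement $\cM_s\to\cM_s^f$ in the model category of $\SpM$-categories with fixed object set $\NN$, so that the Yoneda embedding of $\cM_s^f$ lands in fibrant-cofibrant objects and Corollary~\ref{c:fully faithful} applies; this forces the paper to also prove, via the Guillou--May criterion, that $P_{\SpM}(\cM_s)\adj P_{\SpM}(\cM_s^f)$ is a Quillen equivalence. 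You instead verify the homotopy-fully-faithful condition of Definition~\ref{d:enriched localization} for $Y\colon\cM_s\to P_{\SpM}(\cM_s)$ directly, using cofibrancy of the representables, levelwise fibrant replacement, and the strict enriched Yoneda lemma to identify $\Hom(Y(M_k),Y(M_l)^f)$ with $(G^{\FNCW}_{M_k,M_l})^f$, and then invoke the more general theorem preceding the corollary. This shortcut is valid: it removes both the fibrant replacement and the Quillen-equivalence step, and it also sidesteps the identification $(\cM_s)_\infty\simeq(\cM_s^f)_\infty$ that the paper leaves implicit. The one point to state more explicitly is the agreement of the $\Sp$-action on $P_{\SpM}(\cM_s)_\infty$ induced by the $\SpM$-tensoring with its canonical action as a presentable stable $\infty$-category; the paper settles this by the ``mode'' argument of \cite{CSY}, and your closing paragraph gestures at this correctly but would be tightened by citing it.
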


\begin{proof}
By definition, $\cM_s$ is an $\SpM$-enriched category, whose set of objects is the set of natural numbers $\mathbb N$. Let $\SpM\mathbb N$-$\Cat$ be the category of all $\SpM$-enriched categories, whose set of objects is $\mathbb N$, and whose morphisms are functors that are the identity on objects. This category has a Quillen model structure, where fibrations and weak equivalences are defined levelwise, and where cofibrant objects are levelwise cofibrant~\cite[Proposition 6.3]{ScSh2}.
Thus, $\cM_s$ is an object in $\SpM\mathbb N$-$\Cat$. Let $\cM_s\to \cM_s^f$ be a fibrant replacement of $\cM_s$ in $\SpM\mathbb N$-$\Cat$. We get a Quillen adjunction
$$\LKan_i:P_{\SpM}(\cM_s)\rightleftarrows P_{\SpM}(\cM_s^f):i^*.$$
It follows from~\cite[Proposition 2.4]{GM} that
this adjunction is a Quillen equivalence. To give a little more detail, it follows from the general result of Guillou and May that it is enough to show that for every cofibrant object $M$ of $\SpM$ and every two objects $x, y$ of $\cM_s$, the following induced map is an equivalence
\[
M\wedge \Hom_{\cM_s}(x, y)\to M\wedge \Hom_{\cM_s^f}(x, y),
\]
where $\Hom(-, -)$ denotes the spectral mapping object. The map
$$\Hom_{\cM_s}(x, y)\to \Map_{\cM_s^f}(x, y)$$
is an equivalence by definition of $M_s^f$. It follows by~\cite[Proposition 12.3]{MMSS} that the induced map is an equivalence for all cofibrant $M$.

Applying $\infty$-localization we obtain an equivalence
$$P_{\SpM}(\cM_s)_\infty\xrightarrow{\sim} P_{\SpM}(\cM_s^f)_\infty.$$
Now, $P_{\SpM}(\cM_s^f)$ is an $\SpM$-model category, and it is Quillen equivalent to a combinatorial model category. The enriched Yoneda embedding
$$Y:\cM_s^f\to P_{\SpM}(\cM_s^f)$$
is a fully faithful $\SpM$-enriched functor and it clearly lands in the fibrant cofibrant objects. Thus, by Corollary \ref{c:fully faithful}, the $\Sp$-functor
$$Y_\infty:(\cM_s^f)_\infty\to P_{\SpM}(\cM_s^f)_\infty$$
is $\Sp$-fully faithful. Note that this claim is made with respect to the action of $\Sp$ on $P_{\SpM}(\cM_s^f)_\infty$ induced by the structure of $P_{\SpM}(\cM_s^f)$ as an $\SpM$-model category.
Since $\Sp$ is a mode in the sense of \cite[Section 5]{CSY}, this action coincides with the canonical action of $\Sp$ on $P_{\SpM}(\cM_s^f)_\infty$ as a presentable stable $\infty$-category.

Now, using Theorem \ref{theorem: main presentation strict}, we have the following composition
$$(\cM_s^f)_\infty\xrightarrow{Y_\infty}  P_{\SpM}(\cM_s^f)_\infty\xrightarrow{\sim} P_{\SpM}(\cM_s)_\infty\xrightarrow{\sim} \NSp.$$
We get a fully faithful $\Sp$-enriched functor $(\cM_s^f)_\infty\to\NSp$, with essential image $\cM$, so that $(\cM_s^f)_\infty\simeq\cM$.
\end{proof}

%%%%%%%%%%%%%%%%%%%%%%%%%%%%%%%%%%%%%%%%%%%%%%%%%%%%%%%%%%

\end{document}